\renewenvironment{itemize}{\begin{list}{\labelitemi}{\leftmargin=1.5em}}{\end{list}}
\def\Le{\hbox{\rotatedown{$\Gamma$}}}
\newcommand\pattern[4]{$\genfrac{}{}{0pt}{1}{#1#2}{#3#4}$}
\title{Bijections between pattern-avoiding fillings of Young diagrams}
\date{\today}
\author{Matthieu Josuat-Verg\`es}
\address{LRI, CNRS and Universit\'e Paris-Sud, 
  B\^atiment 490, 91405 Orsay CEDEX, FRANCE}
\newtheorem{prop}{Proposition}
\newtheorem{lem}{Lemma}
\newtheorem{definition}{Definition}
\newtheorem{corollary}{Corollary}
\begin{document}

\renewcommand{\labelitemi}{$\bullet$}

\begin{abstract}
The pattern-avoiding fillings of Young diagrams we study arose from
Postnikov's work on positive Grassman cells. They are called
\Le-diagrams, and are in bijection with decorated permutations.
Other closely-related diagrams are interpreted as acyclic orientations 
of some bipartite graphs. The definition of the diagrams is the same but the
avoided patterns are different. We give here bijections proving that
the number of pattern-avoiding filling of a Young diagram is the same,
for these two different sets of patterns. The result was obtained
by Postnikov via a reccurence relation. This relation was extended by
Spiridonov to obtain more general results about other patterns and
other polyominoes than Young diagrams, and we show that our bijections 
also extend to more general polyominoes.
\end{abstract}

\maketitle

\noindent
KEYWORDS: permutation tableaux, acyclic orientations, fillings, Young 
diagrams, polyominoes

\section{Introduction}

In his work on the combinatorics of the totally positive part of
the Grassmanian and its cell decomposition, A. Postnikov \cite{AP} 
introduced some diagrams called \Le-diagrams. 

\begin{definition}
Let $\lambda$ be a Young diagram, in English notation. 
A \Le-diagram $T$ of shape $\lambda$ 
is defined as a filling of every entry of $\lambda$ with a 0 or a 1,
such that for any 0 in $T$, all entries to its left (in the same row) 
are 0s, or all entries above it (in the same column) are 0s. 
\end{definition}

This definition is
equivalent to the following pattern-avoidance condition: a
diagram filled with 0s and 1s is a \Le-diagram, if and only if
it avoids the patterns \pattern 1110 and \pattern 0110. The
definition of "pattern-avoidance" is the following:

\begin{definition} Let $\lambda$ be a Young diagram. We call 
{\it diagram} of shape $\lambda$ a filling of entries of $\lambda$
with 0s and 1s.
For every square matrix $M$ of size 2, we say that a diagram $D$ of shape 
$\lambda$ avoids the pattern $M$ if there is no submatrix of
$D$ equal to $M$.
\end{definition}

In \cite{AP}, Postnikov gives bijections between \Le-diagrams, and various
combinatorial objects: decorated permutations, some matroids called positroids,
Grassmann necklaces. He also shows that the number of
\Le-diagram of shape $\lambda$ is equal to the number of acyclic
orientations of some graph $G_\lambda$. For a partition $\lambda\subset 
(n-k)^k$, the graph $G_\lambda$ is
the bipartite graph on the vertices $1\ldots k$ and
$1'\ldots(n-k)'$, with edges $(i,j')$ corresponding to cells
$(i,j)$ of the Young diagram defined by $\lambda$.

\medskip

The proof goes recursively. Let $f_\lambda(j)$ be
the number of \Le-diagrams of shape $\lambda$ filled with $j$ ones
and let $F_\lambda(q)$ be the polynomial $\sum_k f_\lambda(j)q^j$.
The polynomial $F_\lambda(q)$ satisfies a very simple recurrence
derived by L. Williams in \cite{LW} for a fixed corner and
generalized by A. Postnikov \cite{AP}. Indeed let us pick a corner
box $x$ of the Young diagram of shape $\lambda$. Let
$\lambda^{(1)}$, $\lambda^{(2)}$, $\lambda^{(3)}$ and
$\lambda^{(4)}$, be the Young diagrams obtained from $\lambda$ by
removing, respectively, the box $x$, the row containing $x$, the
column containing $x$, the column and the row containing $x$. Then
it is easy to see that $F_\lambda(q)=1$ if $|\lambda|=0$ and
\begin{equation}
F_\lambda(q)=qF_{\lambda^{(1)}}(q)+F_{\lambda^{(2)}}(q)
+F_{\lambda^{(3)}}(q)-F_{\lambda^{(4)}}(q) \label{F}
\end{equation}
otherwise.

\bigskip

Let $\chi_\lambda(t)$ be the chromatic polynomial of the graph
$G_\lambda$. A. Postnikov \cite{AP} establishes that
$\chi_\lambda(t)=1$ if $|\lambda|=0$ and
\begin{equation}
\chi_\lambda(t)=\chi_{\lambda^{(1)}}(t)-t^{-1}\big(\chi_{\lambda^{(2)}}
(t)+\chi_{\lambda^{(3)}}(t)-\chi_{\lambda^{(4)}}(t)\big) \label{chi}
\end{equation}
otherwise. According to \cite{Sta}, the value
$(-1)^n\chi_\lambda(-1)$  equals the number $ao_\lambda$ of acyclic
orientations of the graph $G_\lambda$. Specializing equation \eqref{F} 
at $q=1$ and \eqref{chi} at $t=-1$, one obtains
that $ao_\lambda$ and $F_\lambda(1)$ satisfy the same recurrence
and have the same boundary condition and are therefore equal for
any $\lambda$.

\bigskip

The acyclic orientations of the graph $G_\lambda$ are in bijection
with some fillings of the diagram of $\lambda$ with 0s and 1s
which we call {\it X-diagrams}. A diagram is said to be an {\it
X-diagram}, if it avoids the patterns \pattern 1001 and \pattern 0110.
This bijection is very simple: the filling of a cell $(i,j)$ is 0
(resp. 1) if and only if the orientation of the edge $(i,j')$ is
$i\rightarrow j'$ (resp. $i\leftarrow j'$). One can check
that the pattern-avoidance for the X-diagrams is equivalent to the
cycle avoidance for the orientation. Details can be found in
\cite{AP,AS}.

\bigskip

Therefore X-diagrams and \Le-diagrams are equivalent in the
following sense: \begin{prop}\cite{AP,AS} For every Young diagram
$\lambda$, the number of X-diagrams of shape $\lambda$ is equal to
the number of \Le-diagrams of shape $\lambda$.
\end{prop}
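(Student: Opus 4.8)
The ingredients are already in place in the discussion preceding the statement, so the plan is to assemble them into the single numerical identity $ao_\lambda=F_\lambda(1)$. First I would note that the number of X-diagrams of shape $\lambda$ equals $ao_\lambda$: this is exactly the edge-orientation bijection recalled above, under which a cell $(i,j)$ filled with $0$ (resp. $1$) encodes the edge orientation $i\to j'$ (resp. $i\leftarrow j'$), and under which the two forbidden patterns \pattern 1001 and \pattern 0110 are precisely the orientations of a $2\times 2$ configuration that would close a directed cycle in $G_\lambda$; avoiding them is therefore equivalent to acyclicity. Second, I would note that the number of \Le-diagrams of shape $\lambda$ is $F_\lambda(1)$, since $F_\lambda(q)=\sum_j f_\lambda(j)q^j$ is by definition the generating polynomial of \Le-diagrams graded by their number of ones, and setting $q=1$ discards the grading. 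With these two reductions, the Proposition reduces to proving $ao_\lambda=F_\lambda(1)$.

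For this I would argue by induction on $|\lambda|$ by matching recurrences. Specializing \eqref{F} at $q=1$ gives $F_\lambda(1)=F_{\lambda^{(1)}}(1)+F_{\lambda^{(2)}}(1)+F_{\lambda^{(3)}}(1)-F_{\lambda^{(4)}}(1)$ with $F_\emptyset(1)=1$. On the other side I would use Stanley's identity $ao_\lambda=(-1)^{n_\lambda}\chi_\lambda(-1)$ together with \eqref{chi} evaluated at $t=-1$, where the factor $t^{-1}$ becomes $-1$. Feeding the relation $\chi_{\lambda^{(i)}}(-1)=(-1)^{n_{\lambda^{(i)}}}ao_{\lambda^{(i)}}$ into each of the four terms and tracking how the vertex counts $n_{\lambda^{(i)}}$ differ from $n_\lambda$ converts \eqref{chi} into exactly the recurrence satisfied by $F_\lambda(1)$; since $ao_\emptyset=1=F_\emptyset(1)$, the two sequences agree for every $\lambda$ by induction.

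The step I expect to be the only genuine obstacle is the sign bookkeeping in the second recurrence. One must verify that the factor $t^{-1}=-1$ in \eqref{chi} and the parity differences among $n_\lambda,n_{\lambda^{(1)}},\dots,n_{\lambda^{(4)}}$ combine so that, after applying $ao=(-1)^{n}\chi(-1)$, the bracketed minus sign of \eqref{chi} lands exactly on the $\lambda^{(4)}$ term and nowhere else, reproducing \eqref{F}. This is subtle because deleting a whole row or column (forming $\lambda^{(2)},\lambda^{(3)},\lambda^{(4)}$) can remove a vertex of $G_\lambda$ and hence a factor of $t$ from the chromatic polynomial, whereas deleting a single box (forming $\lambda^{(1)}$) removes only an edge; the normalization $\chi_\emptyset=1$ is precisely what is needed for these factors to cancel consistently. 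It is a finite, convention-dependent verification rather than a conceptual difficulty.

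Finally I would point out that this argument is purely enumerative, passing through the chromatic polynomial, and is the route taken in \cite{AP,AS}. The more satisfying and considerably harder alternative, which is the actual subject of this paper, is to produce an explicit bijection between X-diagrams and \Le-diagrams at the level of fillings, thereby proving the equality without any appeal to generating functions or to Stanley's theorem.
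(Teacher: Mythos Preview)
Your proposal is correct and follows essentially the same route as the paper's own argument (outlined in the introduction before the proposition is stated): identify X-diagrams with acyclic orientations of $G_\lambda$, identify the \Le-diagram count with $F_\lambda(1)$, then match the recurrence \eqref{F} at $q=1$ with the recurrence \eqref{chi} at $t=-1$ via Stanley's formula $ao_\lambda=(-1)^n\chi_\lambda(-1)$. Your discussion of the sign/normalization bookkeeping is slightly more explicit than the paper's, but the method is the same.
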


Postnikov \cite{AP} and Burstein \cite{Bu} noticed that \Le-diagrams 
are also equivalent to the diagrams avoiding \pattern 0111 and \pattern 1111.
A. Spiridonov \cite{AS} then made an extensive study on which
pairs of patterns are equivalent, for more general polyominoes than
Young diagrams. He proved that many other patterns follow
similar recurrence relations as in \cite{AP}, and he proved
the equivalence of X-diagrams and \Le-diagrams for a whole 
class of polyominoes, containing for example skew shapes (in French 
notation). 

\bigskip

In this article, we  give bijective proofs for the equivalence of
the two main families of diagrams:
\begin{itemize}
\item the \Le-diagrams which avoids the patterns \pattern 1110 and \pattern 0110,
\smallskip

\item and the X-diagrams which avoids the patterns \pattern 1001 and \pattern 0110.
\smallskip

\end{itemize}
This is done first in the case of Young diagram, but we extend the 
bijection to other polyominoes.

\begin{definition}
A polyomino $S$ is {\it \Le-complete} if satisfies the following 
conditions: 
for any $i<j$ and $k<l$, if $(j,k), (j,l), (i,k)$ are cells of $S$ then
the cell $(i,l)$ is also in $S$. Otherly said, for any three cells arranged
as a $\Le$, there is a $2\times 2$-submatrix of $S$ containing these three
cells.
\end{definition}

This is equivalent to recursive condition ``2$\times$2-connected bottom-right 
CR-erasable" of \cite{AS}. We give a bijection between X-diagrams and \Le-diagrams
for any \Le-complete polyomino. There are some parameters preserved by 
this bijection.

\begin{definition}
  We call {\it zero-row} (resp. {\it zero-column}) a row (resp. column) filled 
with 0s.
  A row of a diagram is restricted, if it contains a 0 having a 1 above it 
in the same column. Otherwise it is called unrestricted.
\end{definition}

The first bijection we describe preserves every zero-row and zero-column.
With a slightly different construction, we have a bijection preserving 
every zero-column and unrestricted row.

\bigskip

As the bijection conserves the zero-columns, we get a
direct corollary linking {\it permutation tableaux} \cite{Bu,CN,SW}
(which are \Le-diagrams with no zero column) and  X-diagrams with
no zero-column:
\begin{corollary}
There exists a bijection between X-diagrams of shape $\lambda$
with no zero-column and $k$ unrestricted rows and permutation
tableaux of shape $\lambda$ with $k$ unrestricted rows.
\end{corollary}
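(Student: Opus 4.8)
The plan is to obtain the corollary as an immediate restriction of the second bijection announced above, namely the one preserving every zero-column and every unrestricted row. Write $\Phi$ for this bijection, which sends X-diagrams of shape $\lambda$ to \Le-diagrams of shape $\lambda$. The entire combinatorial content is already contained in $\Phi$, so the proof will consist in checking that its two preservation properties cut out exactly the subclasses appearing in the statement.

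First I would recall that, by definition, a permutation tableau is a \Le-diagram having no zero-column. Since $\Phi$ carries the zero-columns of a diagram onto the zero-columns of its image (this is what ``preserving every zero-column'' means, read column by column), a diagram $D$ has no zero-column if and only if $\Phi(D)$ has none. Consequently $\Phi$ restricts to a bijection between X-diagrams of shape $\lambda$ with no zero-column and permutation tableaux of shape $\lambda$. Next, because $\Phi$ also preserves every unrestricted row---so that a given row is unrestricted in $D$ exactly when it is unrestricted in $\Phi(D)$---the number of unrestricted rows is an invariant of $\Phi$. Intersecting the restriction just obtained with the condition of having exactly $k$ unrestricted rows then yields the desired bijection between X-diagrams of shape $\lambda$ with no zero-column and $k$ unrestricted rows and permutation tableaux of shape $\lambda$ with $k$ unrestricted rows.

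The one point requiring care---and the only place a genuine obstacle could hide---is that the two invariants must be preserved \emph{simultaneously} by a single map: the corollary speaks of diagrams that are at once free of zero-columns and carry a prescribed number of unrestricted rows. This is precisely why the first bijection (which preserves zero-rows and zero-columns, but not necessarily unrestricted rows) does not suffice, and why the ``slightly different construction'' preserving zero-columns together with unrestricted rows must be invoked instead. Granting that construction, no further argument is needed beyond the two restrictions above.
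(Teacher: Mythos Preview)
Your proposal is correct and matches the paper's own reasoning: the paper presents Corollary~1 as an immediate consequence of the second bijection (later called $\Phi_2$), which preserves every zero-column and every (un)restricted row, and you have simply spelled out the two-step restriction that the paper leaves implicit. The only cosmetic discrepancy is that you name this map $\Phi$, whereas in the paper $\Phi$ denotes the first bijection and the variant you need is $\Phi_2$; the argument itself is unchanged.
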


The {\it length} of a diagram is the number of columns
plus the number of rows of the diagrams. Permutation tableaux of
length $n$ with $k$ unrestricted rows (allowing rows of size 0)
are known to be enumerated by
the Stirling numbers of the first kind \cite{CN}. We immediately
get that:
\begin{corollary}
The number of X-diagrams of length $n$ with no zero column and $k$ 
unrestricted rows is equal to the number of permutations of 
$\{1,\ldots ,n\}$ with $k$ cycles.
\end{corollary}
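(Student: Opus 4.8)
The plan is to obtain this statement as a direct consequence of Corollary 1 together with the cited enumeration of permutation tableaux, so that essentially no new combinatorial construction is needed. First I would note that the bijection furnished by Corollary 1 is built for a \emph{fixed} shape $\lambda$, and hence preserves the number of rows and the number of columns separately; in particular it preserves the length (their sum) as well as the number $k$ of unrestricted rows. Summing this bijection over all shapes $\lambda$ of length $n$ therefore assembles into a single length-preserving, unrestricted-row-preserving bijection between the set of X-diagrams of length $n$ with no zero-column and $k$ unrestricted rows and the set of permutation tableaux of length $n$ with $k$ unrestricted rows.

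Next I would invoke the result of \cite{CN}: permutation tableaux of length $n$ with $k$ unrestricted rows, under the convention that rows of size $0$ are allowed, are enumerated by the unsigned Stirling numbers of the first kind $c(n,k)$. Finally I would recall the classical fact that $c(n,k)$ counts the permutations of $\{1,\ldots,n\}$ with exactly $k$ cycles. Chaining these three equalities then gives the desired count, so the corollary follows immediately once Corollary 1 is in hand.

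The step that deserves the most attention is not any substantial argument but the matching of conventions at the two ends of the chain. On the X-diagram side one must verify that ``length'' (the number of rows plus the number of columns) and ``unrestricted row'' are precisely the statistics transported by Corollary 1, and on the permutation-tableau side one must ensure that the ``allowing rows of size $0$'' convention used in \cite{CN} is the one under which the shapes of length $n$ are being counted. This is the crux: if empty rows were disallowed, they would shift both the length and the unrestricted-row count, and the Stirling identity would fail. Once these bookkeeping conventions are fixed to agree, the statement is an immediate corollary.
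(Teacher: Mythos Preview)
Your proposal is correct and follows essentially the same approach as the paper: the paper simply states that the corollary follows ``immediately'' from Corollary~1 together with the cited result from \cite{CN} that permutation tableaux of length $n$ with $k$ unrestricted rows (allowing empty rows) are counted by the Stirling numbers of the first kind. Your added care about matching conventions---in particular the ``rows of size $0$ allowed'' point---is appropriate and is exactly the bookkeeping the paper implicitly relies on.
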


\bigskip

This article is organized as follows. Section 2 contains 
elementary results that are direct consequences of the previous 
definitions. In Section 3 we define the main bijection of
this article, which answers the original problem given by Postnikov.
In Section 4, we extend this bijection to more general polyominoes
than Young diagrams, proving bijectively some results of Spiridonov.
In Section 5, we discuss various possible generalizations of these
results.

\bigskip

\section*{Acknowledgement}

I thank my advisor Sylvie Corteel for her precious help and guidance,
and Alexey Spiridonov for many interesting comments and suggestions.

\bigskip

\section{Firsts results on the structure of X-diagrams}

We give here several easy but useful lemmas to characterize X-diagrams.
Moreover this will give a raw outline of the methods we use throughout this
article.

\begin{lem}\label{Xstruct}
For any diagram $T$, the following conditions are equivalent:
\begin{enumerate}
\item $T$ is an X-diagram.
\item For every rectangular submatrix $M$ of $T$, two rows of $M$ having 
the same number of 1s are equal.
\item For any rectangular submatrix of $T$ having two rows, 
the index set of 1s in the first row contains, or is contained in, the index
set of 1s in the second row.
\item For every rectangular submatrix $M$ of $T$, the following condition
 holds: if an entry of $M$ contains a 0 and belongs to a row with a maximal
 number of 1s, then this entry belongs to a zero-column.
\end{enumerate}
\end{lem}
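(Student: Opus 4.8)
The plan is to prove the four statements equivalent by establishing the cyclic chain of implications $(1)\Rightarrow(2)\Rightarrow(3)\Rightarrow(4)\Rightarrow(1)$. The conceptual heart of the argument is the observation that the two forbidden patterns \pattern 1001 and \pattern 0110 are precisely the two $2\times 2$ permutation matrices; a diagram is therefore an X-diagram if and only if no choice of two rows and two columns produces a submatrix in which each of the two rows carries exactly one $1$, those two $1$s sitting in distinct columns. Reading each row of a submatrix through its \emph{support} (the index set of its $1$s), this says that within every two-row submatrix the two supports are comparable under inclusion, which is exactly condition $(3)$. Thus $(3)$ is the clean combinatorial reformulation through which I route everything, and once it is in hand the remaining passages are bookkeeping.

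For $(1)\Rightarrow(2)$, I fix a rectangular submatrix $M$ and two of its rows with the same number of $1$s, with supports $A,B$ taken inside $M$. If $A\neq B$ then, being of equal cardinality, neither contains the other, so there are columns $j\in A\setminus B$ and $k\in B\setminus A$; restricting to these two rows and columns $\{j,k\}$ yields a $2\times 2$ submatrix equal to one of \pattern 1001 or \pattern 0110, contradicting that $T$ is an X-diagram. Hence $A=B$ and the rows agree throughout $M$. For $(2)\Rightarrow(3)$ I argue the contrapositive: if some two-row submatrix has incomparable supports $A,B$, I pick $j\in A\setminus B$ and $k\in B\setminus A$ and restrict to columns $\{j,k\}$; the two rows then become two distinct rows carrying one $1$ each, i.e.\ with the same number of $1$s, violating $(2)$.

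For $(3)\Rightarrow(4)$, let $M$ be a rectangular submatrix and let $r$ be a row of $M$ with a maximal number of $1$s. By $(3)$ the supports of the rows of $M$ form a chain under inclusion, so the support of $r$, having maximal cardinality, is the top of the chain and contains every other support; if an entry $(r,c)$ equals $0$ then $c$ lies in no support, whence column $c$ is identically $0$ on $M$, a zero-column, as required. For $(4)\Rightarrow(1)$ I again use the contrapositive: an occurrence of \pattern 1001 or \pattern 0110 is a $2\times 2$ submatrix in which each row has exactly one $1$, so both rows attain the maximal number of $1$s; it contains a $0$ lying in such a maximal row whose column also contains the $1$ of the other row, hence is not a zero-column, contradicting $(4)$.

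The only point demanding care, rather than any genuine difficulty, is the meaning of ``submatrix'' for shapes that are not full rectangles: a rectangular submatrix of a Young diagram (or polyomino) is a choice of rows and of columns all of whose common cells lie in the shape. Every construction above merely \emph{restricts} an already chosen submatrix to a subset of its rows and columns, so the cells invoked are automatically present and no existence question arises. Granting this, the entire content of the lemma is the single translation in the first paragraph between the forbidden $2\times 2$ patterns and the nestedness of the row supports; the four implications then follow at once and the cycle closes.
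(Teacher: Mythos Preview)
Your proof is correct and follows essentially the same approach as the paper: the paper only writes out the implication $(1)\Rightarrow(4)$ in detail and dismisses the remaining directions with ``all other implications are proved with similar arguments,'' using exactly the support-containment idea you make explicit in $(3)$. Your cyclic chain $(1)\Rightarrow(2)\Rightarrow(3)\Rightarrow(4)\Rightarrow(1)$ is a cleaner and more complete organization of the same elementary observations.
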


\begin{proof}
We show that the first condition implies the last one, all other implications
are proved with similar arguments. So let $M$ be a rectangular submatrix of 
an X-diagram (in particular $M$ is also an X-diagram). Let $x,y>0$ be such 
that $M(x,y)=0$ and the $x$th row of $M$ has a maximal number of 1s.
Now if there is $z$ such that $M(z,y)=1$, by the pattern-avoidance condition
we have $M(x,t)=1 \Longrightarrow M(z,t)=1$ for any $t\neq y$. This means that
the $z$th row has strictly more 1s than the $x$th, which contradicts the 
definition of $x$.
\end{proof}

\medskip

From these characterization, we obtain the following statement.

\begin{corollary} \label{X2}
Let $T$ be an X-diagram of rectangular shape.
\begin{itemize}
\item 
If $T'$ is obtained from $T$ by permuting rows, $T'$ is also an X-diagram.
\item If $T'$ is obtained from $T$ by replacing a row with a copy of another 
row, $T'$ is also an X-diagram.
\item
If $T$ and $T'$ have the same set of distinct rows, $T'$ is also an X-diagram.
\end{itemize}
\end{corollary}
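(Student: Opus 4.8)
The plan is to reduce all three statements to a single structural fact: for a diagram $T$ of rectangular shape, $T$ is an X-diagram if and only if the rows of $T$, regarded as their sets of positions of $1$s, are totally ordered by inclusion (they form a chain). This is exactly condition (3) of Lemma \ref{Xstruct} when one takes the full set of columns, so that any two rows of $T$ must have nested $1$-index sets, and since this holds for every pair, the rows form a chain. The virtue of this reformulation is that it depends only on \emph{which rows occur}, being insensitive both to the order of the rows and to their multiplicities. Each of the three operations in the statement merely rearranges or duplicates existing rows, so none of them can destroy the chain property.

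Concretely, I would verify condition (3) of Lemma \ref{Xstruct} for $T'$ directly, which by the lemma suffices to conclude that $T'$ is an X-diagram. Fix any rectangular submatrix $M'$ of $T'$ with two rows; I must show its two $1$-index sets are nested. Each row of $T'$ is, entry for entry, equal to some row of $T$: this is immediate for a permutation of rows and for the replacement of a row by a copy of another, and it is precisely the hypothesis of the third bullet, since there the set of distinct rows is unchanged. Hence the two rows of $M'$ are the restrictions, to some fixed set of columns, of two rows of $T$ (possibly the same row taken twice). If they come from one and the same row of $T$, the two index sets are equal and trivially nested; if they come from two distinct rows of $T$, then condition (3) applied to $T$ gives that their full $1$-index sets are nested, and nesting is stable under restriction to any subset of columns. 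In either case $M'$ satisfies the required containment, so $T'$ is an X-diagram.

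The three bullets then follow by the same observation, namely that in each case every row of $T'$ coincides with a row of $T$: permuting rows only relabels them; replacing one row by a copy of another produces only rows already present in $T$; and sharing the same set of distinct rows means exactly that every row of $T'$ already occurs in $T$.

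I expect no serious obstacle, since the substantive work is already contained in Lemma \ref{Xstruct}; the only points requiring care are bookkeeping ones. The first is the role of rectangularity: it is what makes the operations well defined, all rows having equal length so that permuting or copying them yields a diagram of the same shape, and it is also what lets me treat rows and sets of columns uniformly when invoking condition (3). The second is the elementary remark that inclusion of $1$-index sets is preserved under deleting columns, which is precisely what transfers the chain property of $T$ to an arbitrary submatrix of $T'$.
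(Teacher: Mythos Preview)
Your proof is correct and follows the same route the paper implicitly intends: the corollary is stated as an immediate consequence of the characterizations in Lemma~\ref{Xstruct}, and you have simply spelled out how condition~(3) (nesting of $1$-index sets of rows) makes the X-diagram property depend only on the multiset of rows, hence invariant under permutation, duplication, or any change preserving the set of distinct rows. The paper gives no further argument beyond pointing to the lemma, so your write-up is a faithful expansion of what was left to the reader.
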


There is also a similar statement with columns instead of rows. Notice that
a X-diagram of rectangular shape, after permuting its rows and its columns
can be arranged such that the 1s and the 0s are two complementary Young diagrams.

\bigskip

\section{The bijection $\Phi$ between \Le-diagrams and X-diagrams}

This bijection is defined recursively with respect to the number of rows.
We first define a bijection $\phi$ between X-diagrams and 
{\it mixed diagrams} (see definition below). 
Once $\phi$ is defined there is a short recursive
definition of $\Phi$.

\smallskip

Through this section we only consider diagrams whose shape is a Young diagram
in English notation.We use the convention that the top-left corner is the entry $(1,1)$, 
the bottom-left corner is the entry $(k,1)$, the top-right corner is the entry
$(1,\lambda_1)$, and so on. For any diagram $T$, we denote by $T(i,j)\in\{0,1\}$ 
the number in the cell $(i,j)$ of $T$.

\subsection{The bijection $\phi$ between X-diagrams and mixed diagrams}

Through this section, let $\lambda$ be a Young diagram, in English 
notation, and $k$ its number of rows.
The row lengths are a weakly decreasing sequence $\lambda_1 \geq
\dots \geq \lambda_k>0$.

\begin{definition} 
A {\it mixed diagram} of shape $\lambda$ is a diagram of 
shape $\lambda$ with the following properties:
\begin{itemize}
\item The $k-1$ top rows are an X-diagram,
\item For any 0 in the $k$th row, there is no 1 above it in the same column, 
or there is no 1 to its left in the same row.
\end{itemize}
\end{definition}

The goal of this section is to prove the following:

\begin{prop} \label{phi}
There exists a bijection $\phi$ between X-diagrams of shape $\lambda$ and mixed 
diagrams of shape $\lambda$. Moreover for any X-diagram $T$, 
$T$ and $\phi(T)$ have the same set of zero-columns  and zero-rows.
\end{prop}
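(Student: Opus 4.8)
The plan is to build $\phi$ by acting on the bottom ($k$th) row together with the only columns it meets, namely columns $1,\dots,\lambda_k$; the entries of the top $k-1$ rows in columns $>\lambda_k$ never occur in a pattern involving row $k$, so I would carry them along unchanged, which already takes care of every zero-column supported there. Inside this left block both objects demand that the top $k-1$ rows be an X-diagram, so by condition (3) of Lemma~\ref{Xstruct} the sets of ones of those rows (restricted to the present columns) are pairwise nested and hence form a chain. Writing $R$ for the set of ones of row $k$ and $A$ for the set of columns that are nonempty above row $k$ (the union of the top sets), the first step is to describe the two admissible families. On the X-side $R$ must be nested with each top row, so a one of $R$ lying outside $A$ forces $A\subseteq R$. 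On the \Le-side the condition of the statement says precisely that, read on the columns of $A$, all ones of row $k$ follow all zeros, so the restriction of row $k$ to $A$ is a threshold word $0\cdots0\,1\cdots1$, while the ones of $R$ outside $A$ are only required to sit to the right of the first one.

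Next I would define $\phi$ on the left block as the transformation that turns the ``nested/comparable'' configuration on $A$ into this threshold configuration, the surplus or deficit of ones being absorbed by modifying the rows above and re-sorting them into the $k-1$ physical positions in the way that keeps the whole top an X-diagram compatible with the untouched columns to the right of $\lambda_k$. One must allow that the set $A$ itself, and even the number of ones, changes under $\phi$. The inverse is then obtained by running the procedure backwards, restoring from the threshold word the unique comparable configuration dictated by the chain lying above it. The two properties to check are that $\phi$ and this candidate inverse are mutually inverse, and that $\phi$ preserves the \emph{set} of zero-columns and the \emph{set} of zero-rows; since a zero-line is all-zeros, preserving these sets is exactly the assertion of the Proposition, and I would verify it directly on the nested-chain description (optionally after first deleting all zero-rows and zero-columns, re-inserting them at the end and checking that no new zero-line is created).

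The hard part will be the redistribution of ones among the top rows, for $\phi$ cannot simply rewrite row $k$ with the rows above held fixed: already for the shape $(2,2)$ a fixed top row $10$ admits three X-diagram completions of the bottom row but four \Le-completions, so the admissible bottom rows are not equinumerous over a fixed top part and $\phi$ is forced to move diagrams between different top parts. Making this redistribution canonical and genuinely reversible, while keeping the rebuilt top $k-1$ rows an X-diagram consistent with the spectator columns and simultaneously matching all zero-rows and zero-columns, is the crux of the argument. I would therefore spend most of the effort exhibiting the explicit local rule together with its inverse, and then proving on the chain model that they are mutually inverse and transport the zero-row and zero-column data.
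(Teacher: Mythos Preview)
Your proposal is a plan rather than a proof: you correctly isolate the relevant block (columns $1,\dots,\lambda_k$), correctly observe via Lemma~\ref{Xstruct} that the top $k-1$ rows form a chain under inclusion, and correctly diagnose with the $(2,2)$ example that $\phi$ cannot fix the top rows. But you then stop exactly where the content begins, writing that you ``would spend most of the effort exhibiting the explicit local rule.'' That rule is the whole proposition. Without it there is nothing to check, and your description of what it should do (``turns the nested configuration on $A$ into a threshold configuration, the surplus or deficit of ones being absorbed by modifying the rows above and re-sorting them'') is not specific enough to determine a map, let alone an invertible one. In particular, ``re-sorting the top rows'' is the wrong picture: any permutation of the top rows would have to be the identity, since those rows must remain compatible with the untouched columns $>\lambda_k$, so the redistribution cannot be a reshuffling of rows.

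The paper's construction is quite different in flavor and supplies precisely the missing idea. One singles out a \emph{pivot column}: among columns $1,\dots,\lambda_k$ with a $1$ in the bottom row, take those with the most $0$s, and among these take the leftmost. Then $\phi$ acts column by column: columns left of the pivot get a $0$ in the bottom entry; columns right of the pivot that are identical to the pivot are replaced by a single $1$ at the bottom and $0$s above; all other non-zero columns right of the pivot get a $1$ at the bottom. Only the second clause touches the top $k-1$ rows, and it does so by zeroing out specific columns, not by rearranging rows. Invertibility follows because the pivot is recoverable from $\phi(T)$ as the leftmost $1$ in the bottom row (Lemma~\ref{lem2}), and the original bottom entries are then forced by comparing $0$-counts with the pivot (Lemmas~\ref{lem3} and~\ref{lem4}). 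The preservation of zero-rows and zero-columns is then a short check. This pivot mechanism is the idea you are missing; your chain description is a useful way to see \emph{why} such a rule can exist, but it does not by itself produce one.
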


\medskip

The essential tool for this bijection is given by the following definition.

\begin{definition} \label{pivot}
We call {\it pivot column} of an X-diagram of shape $\lambda$, a 
column among the $\lambda_k$ first ones ({\it i.e.} it is a column of
maximal size) such that:
\begin{itemize}
\item there is a 1 in bottom position,
\item it has a maximal number of 0s among columns satisfying the previous 
  property,
\item it is in leftmost position among columns satisfying the previous two
  properties.
\end{itemize}

If such a column exists, it is uniquely defined (by the third property).
There is no such column only in the case where the bottom row is a zero-row.
\end{definition}

\medskip

We now come to the description of $\phi$. Let $T$ be an X-diagram of shape
$\lambda$. In the case where there is no pivot column in $T$, otherly said the
bottom row is a zero-row, $T$ is also a mixed-diagram and we put $\phi(T)=T$.
Otherwise, we define $\phi(T)$ as the result of the following column-by-column
transformation of $T$, where $j$ is the index of its pivot column:

\medskip

\begin{itemize}
\item In a column of index $i$ with $i<j$, we change the bottom entry into a 0.
\item A column of index $i$ with $j<i\leq\lambda_k$, which is identical to the pivot
  column, is changed into a column having a 1 in bottom position and 0s
  elsewhere.
\item In a column of index $i$ with $j<i\leq\lambda_k$, which is not identical to the
 pivot column and is not a zero-column, we change the bottom entry into a 1.
\end{itemize}

\bigskip

Notice that this transformation only modifies the $\lambda_k$ leftmost columns
of $T$. For example, an X-diagram $T$ and its image $\phi(T)$ are given by:

\begin{figure}[h!tp]
\centering
\psset{unit=3.6mm}
\begin{pspicture}(-1.6,0)(12,5)
\psline(0,0)(9,0)\psline(0,1)(11,1)\psline(0,2)(12,2)
\psline(0,3)(12,3)\psline(0,4)(12,4)\psline(0,5)(12,5)
\psline(0,0)(0,5)
\psline(1,0)(1,5)\psline(2,0)(2,5)\psline(3,0)(3,5)\psline(4,0)(4,5)
\psline(5,0)(5,5)\psline(6,0)(6,5)\psline(7,0)(7,5)\psline(8,0)(8,5)
\psline(9,0)(9,5)\psline(10,1)(10,5)\psline(11,1)(11,5)\psline(12,2)(12,5)

\rput(3.5,0.5){\bf 1}\rput(3.5,1.5){\bf 0}\rput(3.5,2.5){\bf 0}
\rput(3.5,3.5){\bf 1}\rput(3.5,4.5){\bf 0}

\rput(0.5,0.5){1}\rput(0.5,1.5){1}\rput(0.5,2.5){1}\rput(0.5,3.5){1}\rput(0.5,4.5){1}
\rput(1.5,0.5){1}\rput(1.5,1.5){0}\rput(1.5,2.5){1}\rput(1.5,3.5){1}\rput(1.5,4.5){0}
\rput(2.5,0.5){0}\rput(2.5,1.5){0}\rput(2.5,2.5){0}\rput(2.5,3.5){1}\rput(2.5,4.5){0}

\rput(4.5,0.5){1}\rput(4.5,1.5){0}\rput(4.5,2.5){1}\rput(4.5,3.5){1}\rput(4.5,4.5){1}
\rput(5.5,0.5){0}\rput(5.5,1.5){0}\rput(5.5,2.5){0}\rput(5.5,3.5){1}\rput(5.5,4.5){0}
\rput(6.5,0.5){1}\rput(6.5,1.5){0}\rput(6.5,2.5){1}\rput(6.5,3.5){1}\rput(6.5,4.5){0}
\rput(7.5,0.5){0}\rput(7.5,1.5){0}\rput(7.5,2.5){0}\rput(7.5,3.5){0}\rput(7.5,4.5){0}
\rput(8.5,0.5){1}\rput(8.5,1.5){0}\rput(8.5,2.5){0}\rput(8.5,3.5){1}\rput(8.5,4.5){0}
                 \rput(9.5,1.5){0}\rput(9.5,2.5){1}\rput(9.5,3.5){1}\rput(9.5,4.5){0}
                 \rput(10.5,1.5){0}\rput(10.5,2.5){0}\rput(10.5,3.5){1}\rput(10.5,4.5){0}
                                  \rput(11.5,2.5){0}\rput(11.5,3.5){0}\rput(11.5,4.5){0}
\rput(-1.6,2.5){$T=$}
\end{pspicture},
\hspace{8mm}
\begin{pspicture}(-2.1,0)(12,5)
\psline(0,0)(9,0)\psline(0,1)(11,1)\psline(0,2)(12,2)
\psline(0,3)(12,3)\psline(0,4)(12,4)\psline(0,5)(12,5)
\psline(0,0)(0,5)
\psline(1,0)(1,5)\psline(2,0)(2,5)\psline(3,0)(3,5)\psline(4,0)(4,5)
\psline(5,0)(5,5)\psline(6,0)(6,5)\psline(7,0)(7,5)\psline(8,0)(8,5)
\psline(9,0)(9,5)\psline(10,1)(10,5)\psline(11,1)(11,5)\psline(12,2)(12,5)

\rput(3.5,0.5){\bf 1}\rput(3.5,1.5){\bf 0}\rput(3.5,2.5){\bf 0}
\rput(3.5,3.5){\bf 1}\rput(3.5,4.5){\bf 0}

\rput(0.5,0.5){0}\rput(0.5,1.5){1}\rput(0.5,2.5){1}\rput(0.5,3.5){1}\rput(0.5,4.5){1}
\rput(1.5,0.5){0}\rput(1.5,1.5){0}\rput(1.5,2.5){1}\rput(1.5,3.5){1}\rput(1.5,4.5){0}
\rput(2.5,0.5){0}\rput(2.5,1.5){0}\rput(2.5,2.5){0}\rput(2.5,3.5){1}\rput(2.5,4.5){0}

\rput(4.5,0.5){1}\rput(4.5,1.5){0}\rput(4.5,2.5){1}\rput(4.5,3.5){1}\rput(4.5,4.5){1}
\rput(5.5,0.5){1}\rput(5.5,1.5){0}\rput(5.5,2.5){0}\rput(5.5,3.5){1}\rput(5.5,4.5){0}
\rput(6.5,0.5){1}\rput(6.5,1.5){0}\rput(6.5,2.5){1}\rput(6.5,3.5){1}\rput(6.5,4.5){0}
\rput(7.5,0.5){0}\rput(7.5,1.5){0}\rput(7.5,2.5){0}\rput(7.5,3.5){0}\rput(7.5,4.5){0}
\rput(8.5,0.5){1}\rput(8.5,1.5){0}\rput(8.5,2.5){0}\rput(8.5,3.5){0}\rput(8.5,4.5){0}
                 \rput(9.5,1.5){0}\rput(9.5,2.5){1}\rput(9.5,3.5){1}\rput(9.5,4.5){0}
                 \rput(10.5,1.5){0}\rput(10.5,2.5){0}\rput(10.5,3.5){1}\rput(10.5,4.5){0}
                                  \rput(11.5,2.5){0}\rput(11.5,3.5){0}\rput(11.5,4.5){0}
\rput(-2.1,2.5){$\phi(T)=$}
\end{pspicture}.\end{figure}
Here the pivot column is the 4th one, with bolded numbers. We can see that the 9th 
column is identical to the pivot column. So it is replaced with a column having a 
single 1 in bottom position.

\bigskip

Now that the map is described, we prove that $\phi$ is indeed a bijection.
We begin with two lemmas that will be helpful to define
$\phi^{-1}$.

\bigskip

\begin{lem} \label{lem2}
 Let $U=\phi(T)$, and $j>0$. Then the following conditions are equivalent:
 \begin{itemize}
 \item The pivot column of $T$ is the $j$th column,
 \item We have $U(k,j)=1$, and $U(k,i)=0$ for any $i<j$.
 \end{itemize}
\end{lem}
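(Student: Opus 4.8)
The plan is to prove the stronger, more symmetric fact that the index of the pivot column of $T$ coincides exactly with the position of the leftmost $1$ in the bottom row of $U=\phi(T)$. Once this single fact is established, both implications of the stated equivalence follow at once, since the second condition ``$U(k,j)=1$ and $U(k,i)=0$ for all $i<j$'' is merely a restatement of ``$j$ is the position of the leftmost $1$ in the bottom row of $U$''. Throughout I write $p$ for the index of the pivot column of $T$, which by Definition~\ref{pivot} is uniquely determined whenever the bottom row of $T$ is not a zero-row, and which satisfies $p\le\lambda_k$ together with $T(k,p)=1$.

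For the forward implication I assume the pivot column is the $j$th, so $p=j$, and read off the bottom row of $U$ directly from the three rules defining $\phi$. The first rule forces the bottom entry of every column of index $i<j$ to become $0$, so $U(k,i)=0$ for all $i<j$. The pivot column itself has index exactly $j$, hence lies in neither of the two ranges $i<j$ nor $j<i\le\lambda_k$ addressed by the rules; it is therefore left untouched, and since the pivot carries a $1$ in bottom position we get $U(k,j)=T(k,j)=1$. This is precisely the second condition.

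For the converse, suppose $U(k,j)=1$ and $U(k,i)=0$ for all $i<j$. Since the bottom row of $U$ then contains a $1$, the bottom row of $T$ cannot be a zero-row: otherwise $\phi(T)=T$ would have an all-zero bottom row. Hence the pivot column of $T$ exists; call its index $p$. Applying the forward implication to $p$ shows that the leftmost $1$ of the bottom row of $U$ sits at position $p$. But the hypothesis on $j$ says the leftmost $1$ sits at position $j$, and the leftmost $1$ is unique, so $j=p$, i.e. the pivot column of $T$ is the $j$th column.

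The only points requiring genuine care are bookkeeping ones, and I expect them to be the main (though modest) obstacle: verifying that none of the three transformation rules can plant a $1$ in the bottom row strictly to the left of the pivot, and that the pivot column's bottom $1$ genuinely survives because its index is excluded from both rule ranges. The remaining columns are irrelevant to locating the leftmost $1$, since those of index $i>\lambda_k$ do not meet row $k$ at all, while any zero-column among $j<i\le\lambda_k$ stays zero, and all of these lie weakly to the right of $j$. Finally, handling the degenerate case in which $T$ has a zero bottom row — so that no pivot exists and $\phi(T)=T$ has no $1$ in its bottom row — shows that both conditions fail simultaneously, keeping the equivalence vacuously intact.
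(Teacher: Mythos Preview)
Your proof is correct and follows essentially the same approach as the paper's. The paper compresses the converse into the single observation that each condition is satisfied by at most one $j$ (so proving the forward implication suffices), whereas you spell out that uniqueness argument explicitly; the underlying logic is identical.
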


\begin{proof} Each statement is either not satisfied by any $j$, or
satisfied by a unique $j$. So we just have to prove the direct implication.
If the pivot column of $T$ is the $j$th column, it is not modified so we have
$U(k,j)=1$. And for any $i<j$, to obtain $U$ we put a 0 in the entry $(k,i)$ of $T$.
So if the first condition is true, the second too, hence the equivalence.
\end{proof}

\smallskip

\begin{lem} \label{lem3}
  Let $j$ be the index of the pivot column of $T$. Then for any $i<j$ the following 
  conditions are equivalent:
  \begin{itemize}
  \item The $k-1$ top entries of the $i$th column contains strictly less
    0s than the $k-1$ top entries of the $j$th column,
  \item $T(k,i)=1$.
  \end{itemize}
\end{lem}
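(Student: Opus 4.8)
The plan is to compare the two columns $i$ and $j$ directly. Write $c_i$ (resp.\ $c_j$) for the number of $0$s among the top $k-1$ entries of column $i$ (resp.\ column $j$); the assertion to prove is then that $c_i<c_j$ holds if and only if $T(k,i)=1$. First I would record that column $i$ has full height $k$: since $j$ is the pivot we have $j\le\lambda_k$, so $i<j\le\lambda_k$ and column $i$ is among the columns of maximal size. I will then prove the two implications separately, the forward one ($T(k,i)=1\Rightarrow c_i<c_j$) from the defining maximality and leftmost properties of the pivot, and the contrapositive of the backward one ($T(k,i)=0\Rightarrow c_i\ge c_j$) from the chain structure of the columns of an X-diagram.

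Suppose first that $T(k,i)=1$. Then column $i$ has a $1$ in bottom position, so it is one of the columns competing for the pivot, and by the maximality of the number of $0$s of the pivot, the total number of $0$s of column $i$ is at most that of column $j$. As both columns carry a $1$ in the bottom cell, all their $0$s lie in the top $k-1$ rows, so this reads $c_i\le c_j$. Equality is impossible: if $c_i=c_j$ then column $i$ would also have a $1$ in the bottom and the maximal number of $0$s, and being to the left of $j$ it would contradict the fact that the pivot is chosen leftmost. Hence $c_i<c_j$.

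Now suppose $T(k,i)=0$. Here I would invoke the structure of X-diagrams applied to columns: for the two full-height columns $i$ and $j$, the set of rows carrying a $1$ in one of them is contained in the set of rows carrying a $1$ in the other, since otherwise a row with entries $1,0$ together with a row with entries $0,1$ would produce one of the forbidden patterns \pattern 1001 or \pattern 0110 (this is the column analogue of Lemma~\ref{Xstruct}, valid because the X-diagram patterns are transpose-invariant). Since $T(k,j)=1$ while $T(k,i)=0$, the $1$-set of column $j$ is not contained in that of column $i$, so the $1$-set of column $i$ is contained in that of column $j$. This containment persists after discarding the bottom row, so column $i$ has no more $1$s than column $j$ among the top $k-1$ entries, that is $c_i\ge c_j$; in particular $c_i<c_j$ fails.

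I expect the only real subtlety to be the strictness in the first implication, which is exactly where the leftmost clause in the definition of the pivot is used; the second implication is then a routine consequence of the column-nestedness coming from the transpose-invariance of the X-diagram patterns.
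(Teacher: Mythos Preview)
Your proof is correct and follows essentially the same approach as the paper's. The paper organizes the argument as a three-case trichotomy on whether $c_i$ is greater than, equal to, or less than $c_j$, while you organize it as two implications, but the underlying ideas match: your forward implication (using the max-$0$s and leftmost clauses of the pivot) corresponds to the paper's first two cases, and your contrapositive via column-nestedness is exactly the paper's third case (the paper phrases it as ``there would be an occurrence of the pattern \pattern 1001'' rather than citing Lemma~\ref{Xstruct}, but it is the same argument).
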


\begin{proof} If the $k-1$ top entries of the $i$th column contains 
  strictly more 0s than the $k-1$ top entries of the $j$th column, we have
  $T(k,i)=0$ since otherwise it would contradict the second point in the 
  definition of the pivot column (maximum number of 0s).
  
  If the $k-1$ top entries of the $i$th column contains exactly as many 0s 
  as the $k-1$ top entries of the $j$th column, then $T(k,i)=0$ since 
  otherwise it would contradict the third point in the definition of the pivot 
  column (recall that two columns having the same number number of 1s are 
  identical in an X-diagram).
  
  Eventually the last case to check is when the $k-1$ top entries of the $i$th 
  column contains strictly less 0s than the $k-1$ top entries of the $j$th 
  column. Then $T(k,i)=1$ since otherwise there would be an occurrence of the 
  pattern \pattern 1001.
\end{proof}

\medskip

The same method also gives a proof of:

\begin{lem} \label{lem4}
  Let $j$ be the index of the pivot column of $T$. Then for any 
  $i$ such that $j<i\leq\lambda_k$ we have:
  \begin{itemize}
  \item If the $k-1$ top entries of the $i$th column contains strictly more
    1s than the $k-1$ top entries of the $j$th column, then $T(k,i)=1$.
  \item If the $k-1$ top entries of the $i$th column contains strictly less
    1s than the $k-1$ top entries of the $j$th column, then $T(k,i)=0$.
  \end{itemize}
\end{lem}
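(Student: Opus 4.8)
The plan is to handle the two bullet points separately, reusing the two mechanisms already at work in the proof of Lemma~\ref{lem3}: pattern avoidance for one of them, and the maximality of the number of 0s in the pivot for the other. The key preliminary observation I would record is that the columns $1,\dots,\lambda_k$ all have height $k$, so restricting to the top $k-1$ rows yields a rectangular X-diagram. Because the forbidden patterns \pattern 1001 and \pattern 0110 are symmetric under transposition, the column analogue of Lemma~\ref{Xstruct}(3) holds, and hence the index sets of 1s among the top $k-1$ entries of columns $i$ and $j$ are nested, one containing the other. I would also recall that $T(k,j)=1$, since the pivot column carries a 1 in bottom position.

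For the first bullet I would argue as follows. If the top $k-1$ entries of column $i$ contain strictly more 1s than those of column $j$, then by the nesting the 1-set of column $j$ is strictly contained in that of column $i$, so there is a row $r<k$ with $T(r,i)=1$ and $T(r,j)=0$. Were $T(k,i)=0$, then, as $j<i$, the rows $r$ and $k$ together with the columns $j$ and $i$ would exhibit the pattern \pattern 0110, which is impossible in an X-diagram; hence $T(k,i)=1$.

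For the second bullet I would instead invoke the definition of the pivot. Suppose the top $k-1$ entries of column $i$ contain strictly fewer 1s, equivalently strictly more 0s, than those of column $j$, and assume for contradiction that $T(k,i)=1$. Then column $i$ is a column of maximal size (since $i\le\lambda_k$) with a 1 in bottom position, whose total number of 0s equals that of its top $k-1$ entries and so strictly exceeds the same count for column $j$, whose bottom entry is also a 1. This would contradict the second defining property of the pivot, that column $j$ has a maximal number of 0s among full-height columns with a 1 in bottom position; therefore $T(k,i)=0$.

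I expect the only subtle point to be the justification of the nesting of the 1-sets of the top $k-1$ entries of columns $i$ and $j$; everything else is a one-line contradiction. Once the column version of Lemma~\ref{Xstruct}(3) is applied to the rectangular submatrix formed by the top $k-1$ rows, each bullet closes immediately, exactly paralleling the corresponding cases of Lemma~\ref{lem3}.
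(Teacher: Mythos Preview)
Your argument is correct and is exactly what the paper intends: the paper does not spell out the proof but simply writes ``The same method also gives a proof of'' Lemma~\ref{lem4}, pointing back to Lemma~\ref{lem3}. Your two cases---pattern avoidance (here yielding an occurrence of \pattern 0110 rather than \pattern 1001, since now $j<i$) for the first bullet, and the maximality of the number of 0s in the pivot for the second---are precisely the two mechanisms of that proof, so your write-up is a faithful expansion of the paper's one-line hint.
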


\noindent
{\it Remark.} In the previous lemma there is nothing about the case of equality.
Indeed, it is possible that some columns are identical to the pivot column, but 
some other columns are identical except that the  bottom entry is different.
To prove that $\phi$ is a bijection it is important to notice that in any case,
two differently-filled columns of $T$ give rise to
differently-filled columns in $\phi(T)$. An essential
remark is that if there is in $T$ a column with a single 1 in bottom position 
and 0s elsewhere, then it is necessarly identical to the pivot column, so that 
there is no confusion possible.

\medskip

With these lemmas we have all information to define the inverse map of $\phi$.
Let $U$ be a mixed diagram. If the bottom row is a zero-row, $U$ is also a
X-diagram and we have $\phi(U)=U$. Otherwise, we can find an X-diagram $T$ 
such that $\phi(T)=U$. Indeed, $T$ can be defined from $U$ with the 
following column-by-column transformation, where $j$ is the index of
the leftmost column in $U$ having a 1 in bottom position:

\medskip

\begin{itemize}
\item In a column of index $i$ such that $i<j$, and where
  the $k-1$ top entries contains strictly less 0s than the $j$th 
  column, the bottom entry is replaced with a 1.

\item A column of index $i$ such that $j<i\leq\lambda_k$, which has
 a 1 in bottom position and 0s elsewhere,
 is replaced with a copy of the $j$th column.

\item Eventually, consider a column of index $i$ such that 
  $j<i\leq\lambda_k$, which does not satisfy the previous condition.
  If the $k-1$ top entries of this column contains at most as many
  1s as the $j$th column, then the last entry is replaced with a 0.
\end{itemize}

\medskip

This transformation is the step-by-step inverse process of the 
transformation
defining $\phi$. So when composing the two maps, with $\phi$ in first 
(resp. in last), we get the identity on X-diagrams (resp.
mixed diagrams). So we are able to prove Proposition \ref{phi}.

\begin{proof} It remains only to check that $T$ and $U=\phi(T)$ have the same 
  zero-columns and zero-rows. The fact about columns is clear since
  the algorithm is a column-by-column transformation. Now let $i<k$ be the
  index of a non-zero row.
  The $i$th row of $T$ may be modified by the algorithm only when 
  $T(i,j)=1$ where $j$ is the index of the pivot column (when transforming 
  a column equal to the pivot column to a column having a single 1). But 
  then we have also $U(i,j)=1$, so the $i$th row of $U$ is non-zero.
  Hence $T$ and $U$ have the same set of zero-rows.
\end{proof}

\smallskip

\subsection{The bijection $\Phi$ between X-diagrams and \Le-diagrams}

Now we describe the main bijection of this article.
Given an X-diagram $T$ of shape $\lambda$,
$\Phi(T)$ is defined by transforming $T$ with the following 
algorithm:
\smallskip

\noindent
\begin{tabular}{|l|}
 \hline
  For $i$ from $k$ to $1$, {\huge\phantom i}
  replace the $i$ top rows of $T$ with their image by $\phi$.\\
 \hline
\end{tabular}
\smallskip

\noindent
Equivalently, we obtain $\Phi(T)$ by applying recursively $\Phi$ to the $k-1$ first
rows of $\phi(T)$.

\medskip

\begin{lem} If the $i$th column of $T$ is a zero-column, then the $i$th 
column of $\Phi(T)$ is also a zero-column.
\end{lem}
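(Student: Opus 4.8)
The plan is to argue by induction on the number of rows $k$, using Proposition \ref{phi} together with the recursive definition of $\Phi$ given just above. The single input I really need is that $\phi$ preserves zero-columns, which is exactly the content of Proposition \ref{phi}; the recursive structure of $\Phi$ then lets the statement propagate row by row with essentially no computation.

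First I would set up the base case: when $k=0$ (or $k=1$) the map $\Phi$ is the identity (there is nothing to recurse into), so the claim is immediate. For the inductive step, fix an X-diagram $T$ of shape $\lambda$ with $k$ rows and suppose its $i$th column is a zero-column. By Proposition \ref{phi}, $T$ and $\phi(T)$ have the same zero-columns, so the $i$th column of $\phi(T)$ is again a zero-column; in particular every entry of column $i$ in $\phi(T)$, including the bottom one when $i\leq\lambda_k$, is a $0$. Now recall that $\Phi(T)$ is obtained from $\phi(T)$ by leaving the bottom row unchanged and applying $\Phi$ to the X-diagram $U'$ formed by the top $k-1$ rows of $\phi(T)$. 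Since column $i$ of $\phi(T)$ is all zeros, its restriction to $U'$ is a zero-column of $U'$, and the induction hypothesis yields that the $i$th column of $\Phi(U')$ is a zero-column. Reassembling, the $i$th column of $\Phi(T)$ consists of these zero entries on top, together with the (unchanged, hence $0$) bottom entry from $\phi(T)$ when it exists, so it is a zero-column, completing the induction.

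The only point requiring a moment of care --- and the closest thing to an obstacle --- is checking that passing from $\lambda$ to the shape $\lambda'=(\lambda_1,\dots,\lambda_{k-1})$ behaves well with respect to column $i$. Here I would invoke the English-notation Young-diagram structure: since the row lengths weakly decrease, each column occupies a contiguous block of the topmost rows, so column $i$ of $T$ restricts cleanly to a (possibly shorter) column of $U'$ that is still entirely contained in it and still all zeros, while the at most one cell lost to the bottom row is handled separately as above. This is routine, so I expect the proof to be short and the real work to be simply quoting Proposition \ref{phi} correctly.
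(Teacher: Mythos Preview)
Your proposal is correct and follows essentially the same approach as the paper: the paper's proof is the single sentence ``It is a direct consequence of the fact that $\phi$ preserves every zero-column,'' and your induction on $k$ using the recursive description of $\Phi$ is precisely the unpacking of that sentence. You have simply made explicit the row-by-row propagation that the paper leaves to the reader.
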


\begin{proof}
It is a direct consequence of the fact that $\phi$ preserves
every zero-column.
\end{proof}

\smallskip

\begin{lem}\label{Le}
 The diagram $\Phi(T)$ is a \Le-diagram.
\end{lem}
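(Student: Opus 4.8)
The plan is to argue by induction on the number of rows $k$, exploiting the recursive description of $\Phi$: namely, $\Phi(T)$ is obtained by replacing the top $k-1$ rows of $\phi(T)$ by their image under $\Phi$, while the bottom row of $\phi(T)$ is left unchanged. Recall that to be a \Le-diagram means to avoid \pattern 1110 and \pattern 0110, equivalently that every $0$ has only $0$s to its left in its row or only $0$s above it in its column.

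For the base case $k=1$ a single row contains no $2\times 2$ submatrix, so it vacuously avoids both patterns and $\Phi(T)=T$ is a \Le-diagram. For the inductive step, put $U=\phi(T)$. By Proposition \ref{phi}, $U$ is a mixed diagram: its top $k-1$ rows form an X-diagram, and its bottom row satisfies the condition that for every $0$ at $(k,j)$ there is no $1$ above it in column $j$ or no $1$ to its left in row $k$. The induction hypothesis, applied to the X-diagram formed by the top $k-1$ rows of $U$, shows that their image under $\Phi$ is a \Le-diagram; this image is exactly the top $k-1$ rows of $\Phi(T)$, while the bottom row of $\Phi(T)$ coincides with that of $U$.

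It then remains to verify the \Le-condition for every $0$ of $\Phi(T)$. A $0$ in one of the rows $1,\dots,k-1$ only involves entries of those same rows, both to its left and above it, so its \Le-condition is precisely the one holding in the top $k-1$ rows by induction. The delicate point is the bottom row. Fix a $0$ at $(k,j)$ and use the two alternatives of the mixed-diagram condition. If there is no $1$ to its left in row $k$ of $U$, then, the bottom row being untouched by $\Phi$, the same holds in $\Phi(T)$ and the alternative ``all entries to the left are $0$'' of the \Le-condition is met. If instead there is no $1$ above it, then column $j$ restricted to rows $1,\dots,k-1$ is a zero-column of the top X-diagram of $U$; by the preceding lemma, zero-columns are preserved under $\Phi$, so this column is still a zero-column after the recursion, all entries above $(k,j)$ in $\Phi(T)$ remain $0$, and the alternative ``all entries above are $0$'' is met.

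I expect the main obstacle to be exactly this last cross-interaction: the mixed-diagram condition only certifies the \Le-property of the bottom row relative to the top rows of $U$, yet the recursion subsequently rewrites those top rows. The argument survives because the only way a bottom $0$ could fail the \Le-condition after the recursion is for a fresh $1$ to appear above it, and the zero-column preservation of $\Phi$ forbids precisely this.
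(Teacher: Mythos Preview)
Your proof is correct and follows essentially the same route as the paper: induction on the number of rows, with the inductive step handling the top $k-1$ rows via the recursive description of $\Phi$ and the bottom row via the mixed-diagram condition together with the preservation of zero-columns under $\Phi$. The paper phrases the bottom-row case slightly differently (it assumes a $0$ in row $k$ with a $1$ to its left and shows the whole column must be zero), but the logical content is identical to your two-alternative analysis.
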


\begin{proof} 
We obtain $\Phi(T)$ by applying $\Phi$ to the $k-1$ first 
rows of $\phi(T)$, so we can prove the result recursively on the number
of rows. For $k=1$, every diagram with just one row is an X-diagram and a 
\Le-diagram, and $\Phi$ is the identity in this case.

\medskip

For $k>1$, under the recurrence assumption a 0 in the $k-1$ top rows of $\Phi(T)$
cannot have a 1 to its left and a 1 above it. Now suppose there is a 0 in 
position $(k,i)$, ({\it i.e.} in the $k$th row) having a 1 to its left. We have to
prove that the $i$th column of $\Phi(T)$ is a zero-column.

\medskip

Since the $k$th row of $\Phi(T)$ is the same as in $\phi(T)$, we have
$\phi(T)(k,i)=0$ and this 0 has a 1 to its left. But since $\phi(T)$ is a mixed
diagram, it implies that the $i$th column of $\phi(T)$ is a zero-column.
And since $\Phi$ preserves every zero-column, it implies that the 
$i$th column of $\Phi(T)$ is also a zero-column. This completes the proof.
\end{proof}

\smallskip

\begin{prop} \label{pphi}
The map $\Phi$ is a bijection between X-diagrams of
shape $\lambda$ and \Le-diagrams of shape $\lambda$. Moreover this bijection
preserves the set of zero-rows and the set of zero-columns.
\end{prop}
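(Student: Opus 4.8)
The plan is to prove both assertions simultaneously by induction on the number $k$ of rows, using the recursive description $\Phi(T)=\Psi(\phi(T))$, where $\Psi$ denotes the operation on mixed diagrams that leaves the bottom row untouched and applies $\Phi$ to the top $k-1$ rows. The base case $k=1$ is immediate: there $\Phi$ is the identity, and a one-row diagram is simultaneously an X-diagram and a \Le-diagram, so all three statements hold trivially. For the inductive step I would assume that on $k-1$ rows $\Phi$ is a bijection between X-diagrams and \Le-diagrams preserving zero-rows and zero-columns, and deduce the same on $k$ rows. Note that Lemma \ref{Le} already guarantees the image lands among \Le-diagrams, and the preceding lemma already gives zero-column preservation, so the real work is bijectivity and zero-row preservation.

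The heart of the argument is a clean decomposition of the \Le-condition. I would first observe that a diagram on $k$ rows is a \Le-diagram if and only if its top $k-1$ rows form a \Le-diagram and its bottom row satisfies the condition that for every $0$ at $(k,i)$ there is no $1$ above it in column $i$, or no $1$ to its left in row $k$. Indeed, the \Le-condition for a $0$ in rows $1,\dots,k-1$ never refers to the bottom row (the cells ``above'' lie in strictly higher rows), while for a $0$ in row $k$ it is exactly the stated condition. But this is precisely the defining condition for the bottom row of a mixed diagram. Hence the set I must reach under $\Psi$ --- diagrams whose top $k-1$ rows form a \Le-diagram and whose bottom row satisfies the mixed condition --- coincides exactly with the set of \Le-diagrams of shape $\lambda$.

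It then remains to show that $\Psi$ is a bijection from mixed diagrams onto this set. Injectivity is immediate, since $\Psi$ fixes the bottom row and applies the injective $(k-1)$-row map $\Phi$ to the top. For surjectivity, given a \Le-diagram $L$, the induction hypothesis produces an X-diagram realizing the top $k-1$ rows of $L$ under $\Phi$; pairing it with the bottom row of $L$ gives a candidate preimage, and I must check it is genuinely a mixed diagram. The main obstacle lies exactly here: the bottom-row condition ``no $1$ above $(k,i)$'' is phrased in terms of the top rows, and $\Psi$ replaces the top X-diagram by a different \Le-diagram, so a priori the condition could be destroyed. The key observation is that ``no $1$ above $(k,i)$'' is equivalent to column $i$ being a zero-column within the top $k-1$ rows, and by the induction hypothesis $\Phi$ preserves zero-columns; therefore this condition transfers faithfully between the X-top and the \Le-top. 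This makes $\Psi$ a bijection, and since $\phi$ is a bijection by Proposition \ref{phi}, the composite $\Phi=\Psi\circ\phi$ is a bijection between X-diagrams and \Le-diagrams.

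Finally, preservation of the two statistics follows along the same recursion. Zero-column preservation is already in hand, and for zero-rows I would argue that the bottom row of $\Phi(T)$ equals the bottom row of $\phi(T)$, so it is zero if and only if the bottom row of $T$ is zero, since $\phi$ preserves zero-rows by Proposition \ref{phi}; meanwhile each row $i<k$ is zero in $\Phi(T)$ if and only if it is zero in the top $k-1$ rows of $\phi(T)$ by the induction hypothesis, hence if and only if it is zero in $T$, again by Proposition \ref{phi}. This closes the induction and completes the proof.
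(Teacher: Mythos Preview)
Your proof is correct and follows the same recursive strategy as the paper: decompose $\Phi$ as $\phi$ followed by $\Phi$ on the top $k-1$ rows, then use induction together with Proposition~\ref{phi} and Lemma~\ref{Le}. Your explicit characterization of \Le-diagrams as ``mixed diagrams whose top $k-1$ rows form a \Le-diagram,'' and your use of zero-column preservation to check that $\Psi$ is well-defined in both directions, spell out precisely what the paper compresses into the sentence ``with Lemma~\ref{Le} and knowing that $\phi$ is bijective, it is clear that $\Phi^{-1}\circ\Phi$ and $\Phi\circ\Phi^{-1}$ are the identity''; the underlying argument is the same.
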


\begin{proof}
At every step, the upper part of $T$ still avoids the patterns
\pattern 1001 and \pattern 0110 , so that it makes sense to take its image 
by $\phi$. Now, for every \Le-diagram U of shape $\lambda$, define 
$\Phi^{-1}(U)$ by transforming $U$ with the following algorithm:
\smallskip

\noindent
\begin{tabular}{|l|}
 \hline
  For $i$ from $1$ to $k$, {\huge\phantom i}
  replace the $i$ top rows of $U$ with their image by $\phi^{-1}$.\\
 \hline
\end{tabular}
\smallskip

\noindent
Equivalently, we can replace the $k-1$ top rows of $U$ with their image by
$\Phi^{-1}$ and then apply $\phi^{-1}$ to obtain $\Phi^{-1}(U)$.
With Lemma \ref{Le} and knowing that $\phi$ is bijective, it is clear that 
$\Phi^{-1}\circ\Phi$ is the identity on X-diagrams and $\Phi\circ\Phi^{-1}$ is
the  identity on \Le-diagrams, so that $\Phi$ is bijective and $\Phi^{-1}$ 
as we defined it is indeed its inverse. Moreover, the fact 
that $\phi$ and $\phi^{-1}$ preserve every zero-column, directly 
implies the same fact for $\Phi$ and $\Phi^{-1}$. Similarly, the fact that 
$\phi$ preserves the set of zero-rows implies the same property for 
$\Phi$.
\end{proof}

\subsection{Examples}

This example contains no zero-column or zero-row, since they would be
unchanged at any step of the process. We start from an X-diagram $T$, and 
compute step by step its image by $\Phi$. So each step gives an example of 
a diagram and its image by $\phi$. The thick lines 
indicate where is the upper-left part, which is to be replaced with its 
image by $\phi$. Bold numbers indicate the pivot column. Suppose that we have:

\[
\mbox{
\psset{unit=0.4cm}
\begin{pspicture}(0,0)(2,5)
\rput(0.5,2.5){$T=$}
\end{pspicture}
}
\mbox{
\psset{unit=0.4cm}
\begin{pspicture}(0,0)(7,5)
  \psframe[dimen=middle](0,4)(6,5)
  \psframe[dimen=middle](0,3)(6,4)
  \psframe[dimen=middle](0,2)(5,3)
  \psframe[dimen=middle](0,1)(4,2)
  \psframe[dimen=middle](0,0)(3,1)

  \psframe[dimen=middle](0,0)(1,5)
  \psframe[dimen=middle](1,0)(2,5)
  \psframe[dimen=middle](2,0)(3,5)
  \psframe[dimen=middle](3,1)(4,5)
  \psframe[dimen=middle](4,2)(5,5)
  \psframe[dimen=middle](5,3)(6,5)

  \rput(0.5,4.5){\small 1}
  \rput(1.5,4.5){\small 1}
  \rput(2.5,4.5){\small 1}
  \rput(3.5,4.5){\small 1}
  \rput(4.5,4.5){\small 1}
  \rput(5.5,4.5){\small 1}

  \rput(0.5,3.5){\small 0}
  \rput(1.5,3.5){\small 0}
  \rput(2.5,3.5){\small 0}
  \rput(3.5,3.5){\small 1}
  \rput(4.5,3.5){\small 0}
  \rput(5.5,3.5){\small 0}

  \rput(0.5,2.5){\small 1}
  \rput(1.5,2.5){\small 1}
  \rput(2.5,2.5){\small 0}
  \rput(3.5,2.5){\small 1}
  \rput(4.5,2.5){\small 0}

  \rput(0.5,1.5){\small 1}
  \rput(1.5,1.5){\small 0}
  \rput(2.5,1.5){\small 0}
  \rput(3.5,1.5){\small 1}

  \rput(0.5,0.5){\small 1}
  \rput(1.5,0.5){\small 1}
  \rput(2.5,0.5){\small 1}
\end{pspicture}}.\]
\medskip

\noindent
In the first step, the index of the pivot column is $j=3$. We have to put 
0s in the $j-1$ leftmost entries of the bottom row. Since there is no column to
the right of the pivot column, there is nothing else to do. So the 
transformation is:
\[
\mbox{
\psset{unit=0.4cm}
\begin{pspicture}(0,0)(7,5)
  \psframe[linewidth=0.6mm](0,0)(3,5)
  \psframe[dimen=middle](0,4)(6,5)
  \psframe[dimen=middle](0,3)(6,4)
  \psframe[dimen=middle](0,2)(5,3)
  \psframe[dimen=middle](0,1)(4,2)
  \psframe[dimen=middle](0,0)(3,1)

  \psframe[dimen=middle](0,0)(1,5)
  \psframe[dimen=middle](1,0)(2,5)
  \psframe[dimen=middle](2,0)(3,5)
  \psframe[dimen=middle](3,1)(4,5)
  \psframe[dimen=middle](4,2)(5,5)
  \psframe[dimen=middle](5,3)(6,5)

  \rput(0.5,4.5){\small 1}
  \rput(1.5,4.5){\small 1}
  \rput(2.5,4.5){\small\bf 1}
  \rput(3.5,4.5){\small 1}
  \rput(4.5,4.5){\small 1}
  \rput(5.5,4.5){\small 1}

  \rput(0.5,3.5){\small 0}
  \rput(1.5,3.5){\small 0}
  \rput(2.5,3.5){\small\bf 0}
  \rput(3.5,3.5){\small 1}
  \rput(4.5,3.5){\small 0}
  \rput(5.5,3.5){\small 0}

  \rput(0.5,2.5){\small 1}
  \rput(1.5,2.5){\small 1}
  \rput(2.5,2.5){\small\bf 0}
  \rput(3.5,2.5){\small 1}
  \rput(4.5,2.5){\small 0}

  \rput(0.5,1.5){\small 1}
  \rput(1.5,1.5){\small 0}
  \rput(2.5,1.5){\small\bf 0}
  \rput(3.5,1.5){\small 1}

  \rput(0.5,0.5){\small 1}
  \rput(1.5,0.5){\small 1}
  \rput(2.5,0.5){\small\bf 1}
\end{pspicture}}
\mbox{
\psset{unit=0.4cm}
\begin{pspicture}(0,0)(7,5)
\rput(3.5,2.5){$\longrightarrow$}
\end{pspicture}
}
\mbox{
\psset{unit=0.4cm}
\begin{pspicture}(0,0)(7,5)
  \psframe[linewidth=0.6mm](0,0)(3,5)
  \psframe[dimen=middle](0,4)(6,5)
  \psframe[dimen=middle](0,3)(6,4)
  \psframe[dimen=middle](0,2)(5,3)
  \psframe[dimen=middle](0,1)(4,2)
  \psframe[dimen=middle](0,0)(3,1)

  \psframe[dimen=middle](0,0)(1,5)
  \psframe[dimen=middle](1,0)(2,5)
  \psframe[dimen=middle](2,0)(3,5)
  \psframe[dimen=middle](3,1)(4,5)
  \psframe[dimen=middle](4,2)(5,5)
  \psframe[dimen=middle](5,3)(6,5)

  \rput(0.5,4.5){\small 1}
  \rput(1.5,4.5){\small 1}
  \rput(2.5,4.5){\small 1}
  \rput(3.5,4.5){\small 1}
  \rput(4.5,4.5){\small 1}
  \rput(5.5,4.5){\small 1}

  \rput(0.5,3.5){\small 0}
  \rput(1.5,3.5){\small 0}
  \rput(2.5,3.5){\small 0}
  \rput(3.5,3.5){\small 1}
  \rput(4.5,3.5){\small 0}
  \rput(5.5,3.5){\small 0}

  \rput(0.5,2.5){\small 1}
  \rput(1.5,2.5){\small 1}
  \rput(2.5,2.5){\small 0}
  \rput(3.5,2.5){\small 1}
  \rput(4.5,2.5){\small 0}

  \rput(0.5,1.5){\small 1}
  \rput(1.5,1.5){\small 0}
  \rput(2.5,1.5){\small 0}
  \rput(3.5,1.5){\small 1}

  \rput(0.5,0.5){\small 0}
  \rput(1.5,0.5){\small 0}
  \rput(2.5,0.5){\small 1}
\end{pspicture}}\]

\noindent
In the second step, the index of the pivot column is $j=1$. 
So the transformation is:
\[
\mbox{
\psset{unit=0.4cm}
\begin{pspicture}(0,0)(7,5)
  \psframe[linewidth=0.7mm](0,1)(4,5)
  \psframe[dimen=middle](0,4)(6,5)
  \psframe[dimen=middle](0,3)(6,4)
  \psframe[dimen=middle](0,2)(5,3)
  \psframe[dimen=middle](0,1)(4,2)
  \psframe[dimen=middle](0,0)(3,1)

  \psframe[dimen=middle](0,0)(1,5)
  \psframe[dimen=middle](1,0)(2,5)
  \psframe[dimen=middle](2,0)(3,5)
  \psframe[dimen=middle](3,1)(4,5)
  \psframe[dimen=middle](4,2)(5,5)
  \psframe[dimen=middle](5,3)(6,5)

  \rput(0.5,4.5){\small\bf 1}
  \rput(1.5,4.5){\small 1}
  \rput(2.5,4.5){\small 1}
  \rput(3.5,4.5){\small 1}
  \rput(4.5,4.5){\small 1}
  \rput(5.5,4.5){\small 1}

  \rput(0.5,3.5){\small\bf 0}
  \rput(1.5,3.5){\small 0}
  \rput(2.5,3.5){\small 0}
  \rput(3.5,3.5){\small 1}
  \rput(4.5,3.5){\small 0}
  \rput(5.5,3.5){\small 0}

  \rput(0.5,2.5){\small\bf 1}
  \rput(1.5,2.5){\small 1}
  \rput(2.5,2.5){\small 0}
  \rput(3.5,2.5){\small 1}
  \rput(4.5,2.5){\small 0}

  \rput(0.5,1.5){\small\bf 1}
  \rput(1.5,1.5){\small 0}
  \rput(2.5,1.5){\small 0}
  \rput(3.5,1.5){\small 1}

  \rput(0.5,0.5){\small 0}
  \rput(1.5,0.5){\small 0}
  \rput(2.5,0.5){\small 1}
\end{pspicture}} 
\mbox{
\psset{unit=0.4cm}
\begin{pspicture}(0,0)(7,5)
\rput(3.5,2.5){$\longrightarrow$}
\end{pspicture}
}
\mbox{
\psset{unit=0.4cm}
\begin{pspicture}(0,0)(7,5)
  \psframe[linewidth=0.7mm](0,1)(4,5)
  \psframe[dimen=middle](0,4)(6,5)
  \psframe[dimen=middle](0,3)(6,4)
  \psframe[dimen=middle](0,2)(5,3)
  \psframe[dimen=middle](0,1)(4,2)
  \psframe[dimen=middle](0,0)(3,1)

  \psframe[dimen=middle](0,0)(1,5)
  \psframe[dimen=middle](1,0)(2,5)
  \psframe[dimen=middle](2,0)(3,5)
  \psframe[dimen=middle](3,1)(4,5)
  \psframe[dimen=middle](4,2)(5,5)
  \psframe[dimen=middle](5,3)(6,5)

  \rput(0.5,4.5){\small 1}
  \rput(1.5,4.5){\small 1}
  \rput(2.5,4.5){\small 1}
  \rput(3.5,4.5){\small 1}
  \rput(4.5,4.5){\small 1}
  \rput(5.5,4.5){\small 1}

  \rput(0.5,3.5){\small 0}
  \rput(1.5,3.5){\small 0}
  \rput(2.5,3.5){\small 0}
  \rput(3.5,3.5){\small 1}
  \rput(4.5,3.5){\small 0}
  \rput(5.5,3.5){\small 0}

  \rput(0.5,2.5){\small 1}
  \rput(1.5,2.5){\small 1}
  \rput(2.5,2.5){\small 0}
  \rput(3.5,2.5){\small 1}
  \rput(4.5,2.5){\small 0}

  \rput(0.5,1.5){\small 1}
  \rput(1.5,1.5){\small 1}
  \rput(2.5,1.5){\small 1}
  \rput(3.5,1.5){\small 1}

  \rput(0.5,0.5){\small 0}
  \rput(1.5,0.5){\small 0}
  \rput(2.5,0.5){\small 1}
\end{pspicture}
}
\]

\noindent
In the third step, we have $j=1$. Here the second column is identical to 
the pivot column so the transformation is:
\[
\mbox{
\psset{unit=0.4cm}
\begin{pspicture}(0,0)(7,5)
  \psframe[linewidth=0.7mm](0,2)(5,5)
  \psframe[dimen=middle](0,4)(6,5)
  \psframe[dimen=middle](0,3)(6,4)
  \psframe[dimen=middle](0,2)(5,3)
  \psframe[dimen=middle](0,1)(4,2)
  \psframe[dimen=middle](0,0)(3,1)

  \psframe[dimen=middle](0,0)(1,5)
  \psframe[dimen=middle](1,0)(2,5)
  \psframe[dimen=middle](2,0)(3,5)
  \psframe[dimen=middle](3,1)(4,5)
  \psframe[dimen=middle](4,2)(5,5)
  \psframe[dimen=middle](5,3)(6,5)

  \rput(0.5,4.5){\small\bf 1}
  \rput(1.5,4.5){\small 1}
  \rput(2.5,4.5){\small 1}
  \rput(3.5,4.5){\small 1}
  \rput(4.5,4.5){\small 1}
  \rput(5.5,4.5){\small 1}

  \rput(0.5,3.5){\small\bf 0}
  \rput(1.5,3.5){\small 0}
  \rput(2.5,3.5){\small 0}
  \rput(3.5,3.5){\small 1}
  \rput(4.5,3.5){\small 0}
  \rput(5.5,3.5){\small 0}

  \rput(0.5,2.5){\small\bf 1}
  \rput(1.5,2.5){\small 1}
  \rput(2.5,2.5){\small 0}
  \rput(3.5,2.5){\small 1}
  \rput(4.5,2.5){\small 0}

  \rput(0.5,1.5){\small 1}
  \rput(1.5,1.5){\small 1}
  \rput(2.5,1.5){\small 1}
  \rput(3.5,1.5){\small 1}

  \rput(0.5,0.5){\small 0}
  \rput(1.5,0.5){\small 0}
  \rput(2.5,0.5){\small 1}
\end{pspicture}}
\mbox{
\psset{unit=0.4cm}
\begin{pspicture}(0,0)(7,5)
\rput(3.5,2.5){$\longrightarrow$}
\end{pspicture}
}
\mbox{
\psset{unit=0.4cm}
\begin{pspicture}(0,0)(7,5)
  \psframe[linewidth=0.7mm](0,2)(5,5)
  \psframe[dimen=middle](0,4)(6,5)
  \psframe[dimen=middle](0,3)(6,4)
  \psframe[dimen=middle](0,2)(5,3)
  \psframe[dimen=middle](0,1)(4,2)
  \psframe[dimen=middle](0,0)(3,1)

  \psframe[dimen=middle](0,0)(1,5)
  \psframe[dimen=middle](1,0)(2,5)
  \psframe[dimen=middle](2,0)(3,5)
  \psframe[dimen=middle](3,1)(4,5)
  \psframe[dimen=middle](4,2)(5,5)
  \psframe[dimen=middle](5,3)(6,5)

  \rput(0.5,4.5){\small 1}
  \rput(1.5,4.5){\small 0}
  \rput(2.5,4.5){\small 1}
  \rput(3.5,4.5){\small 1}
  \rput(4.5,4.5){\small 1}
  \rput(5.5,4.5){\small 1}

  \rput(0.5,3.5){\small 0}
  \rput(1.5,3.5){\small 0}
  \rput(2.5,3.5){\small 0}
  \rput(3.5,3.5){\small 1}
  \rput(4.5,3.5){\small 0}
  \rput(5.5,3.5){\small 0}

  \rput(0.5,2.5){\small 1}
  \rput(1.5,2.5){\small 1}
  \rput(2.5,2.5){\small 1}
  \rput(3.5,2.5){\small 1}
  \rput(4.5,2.5){\small 1}

  \rput(0.5,1.5){\small 1}
  \rput(1.5,1.5){\small 1}
  \rput(2.5,1.5){\small 1}
  \rput(3.5,1.5){\small 1}

  \rput(0.5,0.5){\small 0}
  \rput(1.5,0.5){\small 0}
  \rput(2.5,0.5){\small 1}
\end{pspicture}}\]

\noindent
In the fourth  step, the index of the pivot column is $j=4$.
\[
\mbox{
\psset{unit=0.4cm}
\begin{pspicture}(0,0)(7,5)
  \psframe[linewidth=0.7mm](0,3)(6,5)
  \psframe[dimen=middle](0,4)(6,5)
  \psframe[dimen=middle](0,3)(6,4)
  \psframe[dimen=middle](0,2)(5,3)
  \psframe[dimen=middle](0,1)(4,2)
  \psframe[dimen=middle](0,0)(3,1)

  \psframe[dimen=middle](0,0)(1,5)
  \psframe[dimen=middle](1,0)(2,5)
  \psframe[dimen=middle](2,0)(3,5)
  \psframe[dimen=middle](3,1)(4,5)
  \psframe[dimen=middle](4,2)(5,5)
  \psframe[dimen=middle](5,3)(6,5)

  \rput(0.5,4.5){\small 1}
  \rput(1.5,4.5){\small 0}
  \rput(2.5,4.5){\small 1}
  \rput(3.5,4.5){\small\bf 1}
  \rput(4.5,4.5){\small 1}
  \rput(5.5,4.5){\small 1}

  \rput(0.5,3.5){\small 0}
  \rput(1.5,3.5){\small 0}
  \rput(2.5,3.5){\small 0}
  \rput(3.5,3.5){\small\bf 1}
  \rput(4.5,3.5){\small 0}
  \rput(5.5,3.5){\small 0}

  \rput(0.5,2.5){\small 1}
  \rput(1.5,2.5){\small 1}
  \rput(2.5,2.5){\small 1}
  \rput(3.5,2.5){\small 1}
  \rput(4.5,2.5){\small 1}

  \rput(0.5,1.5){\small 1}
  \rput(1.5,1.5){\small 1}
  \rput(2.5,1.5){\small 1}
  \rput(3.5,1.5){\small 1}

  \rput(0.5,0.5){\small 0}
  \rput(1.5,0.5){\small 0}
  \rput(2.5,0.5){\small 1}
\end{pspicture}}
\mbox{
\psset{unit=0.4cm}
\begin{pspicture}(0,0)(7,5)
\rput(3.5,2.5){$\longrightarrow$}
\end{pspicture}
}
\mbox{
\psset{unit=0.4cm}
\begin{pspicture}(0,0)(7,5)
  \psframe[linewidth=0.7mm](0,3)(6,5)
  \psframe[dimen=middle](0,4)(6,5)
  \psframe[dimen=middle](0,3)(6,4)
  \psframe[dimen=middle](0,2)(5,3)
  \psframe[dimen=middle](0,1)(4,2)
  \psframe[dimen=middle](0,0)(3,1)

  \psframe[dimen=middle](0,0)(1,5)
  \psframe[dimen=middle](1,0)(2,5)
  \psframe[dimen=middle](2,0)(3,5)
  \psframe[dimen=middle](3,1)(4,5)
  \psframe[dimen=middle](4,2)(5,5)
  \psframe[dimen=middle](5,3)(6,5)

  \rput(0.5,4.5){\small 1}
  \rput(1.5,4.5){\small 0}
  \rput(2.5,4.5){\small 1}
  \rput(3.5,4.5){\small 1}
  \rput(4.5,4.5){\small 1}
  \rput(5.5,4.5){\small 1}

  \rput(0.5,3.5){\small 0}
  \rput(1.5,3.5){\small 0}
  \rput(2.5,3.5){\small 0}
  \rput(3.5,3.5){\small 1}
  \rput(4.5,3.5){\small 1}
  \rput(5.5,3.5){\small 1}

  \rput(0.5,2.5){\small 1}
  \rput(1.5,2.5){\small 1}
  \rput(2.5,2.5){\small 1}
  \rput(3.5,2.5){\small 1}
  \rput(4.5,2.5){\small 1}

  \rput(0.5,1.5){\small 1}
  \rput(1.5,1.5){\small 1}
  \rput(2.5,1.5){\small 1}
  \rput(3.5,1.5){\small 1}

  \rput(0.5,0.5){\small 0}
  \rput(1.5,0.5){\small 0}
  \rput(2.5,0.5){\small 1}
\end{pspicture}}\]

\noindent
So finally, we have:
\[
\mbox{
\psset{unit=0.4cm}
\begin{pspicture}(0,0)(2,5)
\rput(0.5,2.5){$\Phi(T)=$}
\end{pspicture}
}
\mbox{
\psset{unit=0.4cm}
\begin{pspicture}(0,0)(7,5)
  \psframe[dimen=middle](0,4)(6,5)
  \psframe[dimen=middle](0,3)(6,4)
  \psframe[dimen=middle](0,2)(5,3)
  \psframe[dimen=middle](0,1)(4,2)
  \psframe[dimen=middle](0,0)(3,1)

  \psframe[dimen=middle](0,0)(1,5)
  \psframe[dimen=middle](1,0)(2,5)
  \psframe[dimen=middle](2,0)(3,5)
  \psframe[dimen=middle](3,1)(4,5)
  \psframe[dimen=middle](4,2)(5,5)
  \psframe[dimen=middle](5,3)(6,5)
  \rput(0.5,4.5){\small 1}
  \rput(1.5,4.5){\small 0}
  \rput(2.5,4.5){\small 1}
  \rput(3.5,4.5){\small 1}
  \rput(4.5,4.5){\small 1}
  \rput(5.5,4.5){\small 1}

  \rput(0.5,3.5){\small 0}
  \rput(1.5,3.5){\small 0}
  \rput(2.5,3.5){\small 0}
  \rput(3.5,3.5){\small 1}
  \rput(4.5,3.5){\small 1}
  \rput(5.5,3.5){\small 1}

  \rput(0.5,2.5){\small 1}
  \rput(1.5,2.5){\small 1}
  \rput(2.5,2.5){\small 1}
  \rput(3.5,2.5){\small 1}
  \rput(4.5,2.5){\small 1}

  \rput(0.5,1.5){\small 1}
  \rput(1.5,1.5){\small 1}
  \rput(2.5,1.5){\small 1}
  \rput(3.5,1.5){\small 1}

  \rput(0.5,0.5){\small 0}
  \rput(1.5,0.5){\small 0}
  \rput(2.5,0.5){\small 1}
\end{pspicture}}\]
which is easily checked to be a \Le-diagram.

\bigskip

\section{Generalization to polyominoes other than Young diagrams}

Now, besides Young diagrams in English notation, we consider more general
polyominoes. In this Section we show that 
X-diagrams and \Le-diagrams are equivalent for any {\it \Le-complete} 
polyomino.
(see Definition \ref{lecomplete}).

\medskip

First we generalize the bijection $\phi$ for diagrams whose shape is a Young
diagram in French notation. Then we derive the generalization of the bijection
$\Phi$ for any \Le-complete polyomino, and we end this section with some 
examples.

\medskip

\begin{definition} \label{lecomplete}
A polyomino $S$ is {\it \Le-complete} if satisfies the following 
conditions:
for any $i<j$ and $k<l$, if $(j,k), (j,l), (i,l)$ are cells of $S$ then
the cell $(i,k)$ is also in $S$. Otherly said, for any three cells arranged
as a $\Le$, there is a $2\times 2$-submatrix of $S$ containing these three
cells.
\end{definition}

This is equivalent to recursive condition ``2$\times$2-connected bottom-right
CR-erasable" of \cite{AS}.
The first examples of \Le-complete polyominoes are Young diagrams in
English notation, so that this section is indeed a generalization of the
previous one. Other examples are skew shapes (also known as paralellogram 
polyominoes), as in the  right part of Figure \ref{lec}.

\begin{figure}[h!tp]
\centering
\psset{unit=8mm}
\begin{pspicture}(0,0)(4,3)
\psframe[linewidth=0.7mm](0,0)(1,1)\rput(0.5,0.5){(i,l)}
\psframe[linewidth=0.7mm](3,0)(4,1)\rput(3.5,0.5){(j,l)}
\psframe[linewidth=0.7mm](3,2)(4,3)\rput(3.5,2.5){(j,k)}
\psframe[linewidth=0.7mm,linestyle=dashed](0,2)(1,3)\rput(0.5,2.5){(i,k)}
\end{pspicture} 
\hspace{1cm}
\psset{unit=5mm}
\begin{pspicture}(0,0)(8,5)
\psline(0,5)(4,5)(4,4)(6,4)(6,3)(7,3)(7,0)(4,0)(4,2)(3,2)(3,3)(1,3)(1,4)(0,4)(0,5)
\psline(4,1)(7,1)\psline(3,2)(7,2)\psline(1,3)(7,3)\psline(0,4)(4,4)
\psline(1,5)(1,4)\psline(2,5)(2,3)\psline(3,5)(3,3)\psline(4,4)(4,2)
\psline(5,4)(5,0)\psline(6,3)(6,0)
\psline(7,2)(8,2)(8,0)(7,0)\psline(7,1)(8,1)
\psline(4,1)(3,1)(3,2)
\psframe[linewidth=0.6mm](4,0)(5,1)
\psframe[linewidth=0.6mm](6,0)(7,1)
\psframe[linewidth=0.6mm](6,2)(7,3)
\psframe[linewidth=0.6mm,linestyle=dashed](4,2)(5,3)
\end{pspicture}
\caption{\Le-completeness \label{lec}. If a \Le-complete polyomino 
contains the
three squares with continuous thick borders, it has to contain also the 
square with dashed thick borders. On the right, we have an example of skew
shape.}
\end{figure}

\bigskip

\subsection{The generalization of $\phi$}

We now suppose that $S$ is a young diagram, in French notation. Let $r$ be
the number of distinct parts of the corresponding partition. Let 
$c_1,\dots,c_r$ be the top right corners of $S$, sorted by growing abscissas. 
And for any top-right corner $c_i$, let $R_i$ be the rectangular subset 
of $S$ formed by all cells that are in the bottom-left quater-plane of origin 
$c_i$. See Figure \ref{rect} for an example. For any X-diagram $T$ of shape $S$
we denote by $T|_{R_i}$ the subdiagram of $T$ obtained by selecting the cells of 
$R_i$.

\begin{figure}[h!tp]
\centering\psset{unit=0.4cm}
\begin{pspicture}(0,-1)(6,5)
\psframe[fillstyle=solid, fillcolor=lightgray](0,0)(2,5)
\rput(1.5,4.5){$c_1$}
\psline(0,0)(6,0)
\psline(0,1)(6,1)
\psline(0,2)(5,2)
\psline(0,3)(5,3)
\psline(0,4)(4,4)
\psline(0,5)(2,5)
\psline(0,0)(0,5)
\psline(1,0)(1,5)
\psline(2,0)(2,5)
\psline(3,0)(3,4)
\psline(4,0)(4,4)
\psline(5,0)(5,3)
\psline(6,0)(6,1)
\rput(3,-1){$R_1$}
\end{pspicture}
\hfill
\begin{pspicture}(0,-1)(6,5)
\psframe[fillstyle=solid, fillcolor=lightgray](0,0)(4,4)
\rput(3.5,3.5){$c_2$}
\psline(0,0)(6,0)
\psline(0,1)(6,1)
\psline(0,2)(5,2)
\psline(0,3)(5,3)
\psline(0,4)(4,4)
\psline(0,5)(2,5)
\psline(0,0)(0,5)
\psline(1,0)(1,5)
\psline(2,0)(2,5)
\psline(3,0)(3,4)
\psline(4,0)(4,4)
\psline(5,0)(5,3)
\psline(6,0)(6,1)
\rput(3,-1){$R_2$}
\end{pspicture}
\hfill
\begin{pspicture}(0,-1)(6,5)
\psframe[fillstyle=solid, fillcolor=lightgray](0,0)(5,3)
\rput(4.5,2.5){$c_3$}
\psline(0,0)(6,0)
\psline(0,1)(6,1)
\psline(0,2)(5,2)
\psline(0,3)(5,3)
\psline(0,4)(4,4)
\psline(0,5)(2,5)
\psline(0,0)(0,5)
\psline(1,0)(1,5)
\psline(2,0)(2,5)
\psline(3,0)(3,4)
\psline(4,0)(4,4)
\psline(5,0)(5,3)
\psline(6,0)(6,1)
\rput(3,-1){$R_3$}
\end{pspicture}
\hfill
\begin{pspicture}(0,-1)(6,5)
\psframe[fillstyle=solid, fillcolor=lightgray](0,0)(6,1)
\rput(5.5,0.5){$c_4$}
\psline(0,0)(6,0)
\psline(0,1)(6,1)
\psline(0,2)(5,2)
\psline(0,3)(5,3)
\psline(0,4)(4,4)
\psline(0,5)(2,5)
\psline(0,0)(0,5)
\psline(1,0)(1,5)
\psline(2,0)(2,5)
\psline(3,0)(3,4)
\psline(4,0)(4,4)
\psline(5,0)(5,3)
\psline(6,0)(6,1)
\rput(3,-1){$R_4$}
\end{pspicture}
\caption{\label{rect} Example of rectangles $R_i$.}
\end{figure}

\begin{definition}\label{pivotgen}
 We call {\it pivot column} of a diagram $T$ of shape $S$ a 
column of index $j$ such that:
\begin{itemize}
\item there is a $i$, such that the $j$th column of $T|_{R_i}$ is its pivot 
  column, and such that $j$ is strictly greater than the column index of 
  $c_{i-1}$ (the last condition is automatically satisfied if $i=1$).
\item it is in righmost position among columns satisfying the first property
  (otherly said, the first property is satisfied with the greatest $i$ 
  possible).
\end{itemize}

If such a column exists, it is uniquely defined. There is no such column 
only in the case where the bottom row is a zero-row.
\end{definition}

The last affirmation is not as obvious as in the previous section, so we give
some precisions.
If there is a 1 in the bottom row, we can consider the smallest $i$ such that the 
bottom row of $T|_{R_i}$ contains a 1, and then the pivot column of $T|_{R_i}$ 
satisfies the first condition, so there is a pivot column in $T$. Reciprocally, if
there is a pivot column, it contains a 1 in bottom position so the bottom row is
not a zero-row. Hence as announced, there is no pivot column in $T$ 
only in the case where the bottom row is a zero-row.

\medskip

Now, if we have an X-diagram $T$ the mixed diagram $\phi(T)$ is defined as in the previous 
case by a column-by-column transformation, but with one difference. Indeed, in the previous 
case a column identical to the pivot column was transformed into a column having a single 1
in bottom position. Now, the modified columns may have different size, and the condition is
the following. If a column to the right of the pivot column, is identical to the pivot column
except that some top cells are removed, it is transformed into a column having a single 1 in
bottom position. This is illustrated in the following example:

\[
\psset{unit=0.4cm}
\begin{pspicture}(0,0)(9,6)
\psline(0,0)(9,0)
\psline(0,1)(9,1)
\psline(0,2)(9,2)
\psline(0,3)(8,3)
\psline(0,4)(7,4)
\psline(0,5)(4,5)
\psline(0,6)(2,6)
\psline(0,0)(0,6)
\psline(1,0)(1,6)
\psline(2,0)(2,6)
\psline(3,0)(3,5)
\psline(4,0)(4,5)
\psline(5,0)(5,4)
\psline(6,0)(6,4)
\psline(7,0)(7,4)
\psline(8,0)(8,3)
\psline(9,0)(9,2)
\rput(0.5,0.5){1}\rput(0.5,1.5){1}\rput(0.5,2.5){1}\rput(0.5,3.5){1}\rput(0.5,4.5){1}\rput(0.5,5.5){1}
\rput(1.5,0.5){0}\rput(1.5,1.5){1}\rput(1.5,2.5){0}\rput(1.5,3.5){1}\rput(1.5,4.5){0}\rput(1.5,5.5){1}
\rput(2.5,0.5){1}\rput(2.5,1.5){1}\rput(2.5,2.5){1}\rput(2.5,3.5){1}\rput(2.5,4.5){0}
\rput(3.5,0.5){\bf 1}\rput(3.5,1.5){\bf 1}\rput(3.5,2.5){\bf 0}\rput(3.5,3.5){\bf 1}\rput(3.5,4.5){\bf 0}
\rput(4.5,0.5){0}\rput(4.5,1.5){1}\rput(4.5,2.5){0}\rput(4.5,3.5){0}
\rput(5.5,0.5){1}\rput(5.5,1.5){1}\rput(5.5,2.5){0}\rput(5.5,3.5){1}
\rput(6.5,0.5){0}\rput(6.5,1.5){1}\rput(6.5,2.5){0}\rput(6.5,3.5){1}
\rput(7.5,0.5){0}\rput(7.5,1.5){1}\rput(7.5,2.5){0}
\rput(8.5,0.5){1}\rput(8.5,1.5){1}
\rput(-1.5,2.5){$T=$}
\end{pspicture}\hspace{1mm},\hspace{2cm}
\begin{pspicture}(0,0)(9,6)
\psline(0,0)(9,0)
\psline(0,1)(9,1)
\psline(0,2)(9,2)
\psline(0,3)(8,3)
\psline(0,4)(7,4)
\psline(0,5)(4,5)
\psline(0,6)(2,6)
\psline(0,0)(0,6)
\psline(1,0)(1,6)
\psline(2,0)(2,6)
\psline(3,0)(3,5)
\psline(4,0)(4,5)
\psline(5,0)(5,4)
\psline(6,0)(6,4)
\psline(7,0)(7,4)
\psline(8,0)(8,3)
\psline(9,0)(9,2)
\rput(0.5,0.5){0}\rput(0.5,1.5){1}\rput(0.5,2.5){1}\rput(0.5,3.5){1}\rput(0.5,4.5){1}\rput(0.5,5.5){1}
\rput(1.5,0.5){0}\rput(1.5,1.5){1}\rput(1.5,2.5){0}\rput(1.5,3.5){1}\rput(1.5,4.5){0}\rput(1.5,5.5){1}
\rput(2.5,0.5){0}\rput(2.5,1.5){1}\rput(2.5,2.5){1}\rput(2.5,3.5){1}\rput(2.5,4.5){0}
\rput(3.5,0.5){\bf 1}\rput(3.5,1.5){\bf 1}\rput(3.5,2.5){\bf 0}\rput(3.5,3.5){\bf 1}\rput(3.5,4.5){\bf 0}
\rput(4.5,0.5){1}\rput(4.5,1.5){1}\rput(4.5,2.5){0}\rput(4.5,3.5){0}
\rput(5.5,0.5){1}\rput(5.5,1.5){0}\rput(5.5,2.5){0}\rput(5.5,3.5){0}
\rput(6.5,0.5){1}\rput(6.5,1.5){1}\rput(6.5,2.5){0}\rput(6.5,3.5){1}
\rput(7.5,0.5){1}\rput(7.5,1.5){1}\rput(7.5,2.5){0}
\rput(8.5,0.5){1}\rput(8.5,1.5){0}
\rput(-1.7,2.5){$\phi(T)=$}
\end{pspicture}\hspace{1mm}.
\]
Here, the respective pivot columns of $T|_{R_1},\dots,T|_{R_5}$ have indices 
1, 4, 4, 4, and 1. The indices $i$ satisfying the first condition of 
Definition \ref{pivotgen} are 1 and 2. So the pivot column of $R$ is the 
one of $T_{R_2}$, {\it i.e.} the 4th one. Now, we can see that the 6th
and 9th column of $T$ are identical to the pivot column, except that some
top cells are removed. So the 6th column of $\phi(T)$, and also the 9th column, 
has a single 1 in bottom position.

\medskip

In the previous section we saw that two differently-filled columns of $T$ give rise 
to two differently-filled columns in $\phi(T)$, provided they are both strictly to 
the left (or to the right) of the pivot column. This is also true in the present case.
The main point is the following lemma, where we really use Definition 
\ref{pivotgen}.

\begin{lem} Let $j$ be the index of the pivot column of $T$. Let $k$ and $\ell$ be
such that $j<k<\ell$, the $k$th column and the $\ell$th column have the same size, and
they are identical except that their bottom entries are different. Then one of 
the $k$th and $\ell$th columns is identical to the pivot column (except that some
top cells are removed).
\end{lem}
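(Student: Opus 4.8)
The plan is to reduce the statement to the rectangular situation of Section~3 inside a single rectangle $R_{i_0}$, and then to track how the pivot column behaves as one passes from one rectangle $R_i$ to the next.

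First I would localize the two columns. Since the $k$th and $\ell$th columns have the same size and $k<\ell$, every column between them has that same size, so they all belong to a single maximal block of equally tall columns; equivalently there is a corner $c_{i_0}$ such that columns $k,\dots,\ell$ all lie, at full height, inside the rectangle $R_{i_0}$, and $T|_{R_{i_0}}$ is itself an X-diagram. From the hypothesis $j<k$ and Definition~\ref{pivotgen} I would next check that the block $i^*$ producing the pivot of $T$ satisfies $i^*\le i_0$, and that the pivot column of $R_{i_0}$ itself sits strictly to the left of column $k$: when $i_0>i^*$ this is exactly the maximality of $i^*$, which forces the pivot of $R_{i_0}$ to lie among the columns already present to the left of the block, and when $i_0=i^*$ it is immediate.

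The second step is the rectangular core. Working inside $R_{i_0}$, let $m\in\{k,\ell\}$ be the column whose bottom entry is $1$ and $m'$ the other one, so that $m$ and $m'$ agree above the bottom cell and the $1$-set of $m$ is that of $m'$ together with the bottom cell. By Lemma~\ref{Xstruct}(3) the $1$-sets of the columns of $R_{i_0}$ form a chain under inclusion. Since $m'$ misses the bottom cell while the pivot of $R_{i_0}$ contains it, nestedness gives that the $1$-set of $m'$, and hence that of $m$, is contained in that of the pivot; but the pivot has the fewest $1$s among columns with a $1$ in the bottom cell, and $m$ is such a column, so $m$ is in fact identical to the pivot column of $R_{i_0}$. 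This part is routine once Lemma~\ref{Xstruct} is available.

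It remains to identify the pivot column of $R_{i_0}$ with the pivot column of $T$ up to erasing top cells, and this is where the real work lies. I would prove, by induction on $i$ from $i^*$ upwards, that the filling of the pivot column of $R_i$ is exactly the bottom truncation of the filling $P$ of the pivot column of $T$ (which is the pivot of $R_{i^*}$). The inductive step passes from $R_i$ to $R_{i+1}$: the columns of $R_i$ form a chain, and truncating a chain to a smaller height preserves both the chain structure and the property of having the minimum number of $1$s among bottom-$1$ columns, so the truncation of the $R_i$-pivot is still the correct candidate in $R_{i+1}$; the point to secure is that none of the shorter columns newly appearing in $R_{i+1}$ can overtake it, and this is guaranteed precisely by the maximality of $i^*$ in Definition~\ref{pivotgen}, which keeps the pivot of $R_{i+1}$ among the columns already present in $R_i$. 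Granting this stability, the pivot of $R_{i_0}$ is the bottom truncation of $P$, and combining with the previous step yields that $m$ is identical to the pivot column of $T$ with its top cells removed, as claimed. I expect this last induction to be the main obstacle, the localization and the rectangular nestedness argument being direct consequences of the definitions and of Lemma~\ref{Xstruct}.
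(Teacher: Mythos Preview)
Your argument is correct. The route differs in organization from the paper's, though the two proofs rest on the same two ingredients: the chain structure of the $1$-sets (Lemma~\ref{Xstruct}) and the maximality of the index $i^*$ in Definition~\ref{pivotgen}. The paper compares the $k$th column directly with the global pivot and splits into three cases according to whether the $i-1$ non-bottom cells of column $k$ carry more, fewer, or equally many $0$s than the corresponding cells of the pivot; the two strict cases are eliminated (the ``more $1$s'' case by a forbidden \pattern 0110, the ``more $0$s'' case by a backward walk through the rectangles $R_h,R_{h-1},\dots$ contradicting the choice of $i^*$), leaving equality. You instead avoid the case split: you first show $m$ coincides with the pivot of $R_{i_0}$ using nestedness, and then run a forward induction from $R_{i^*}$ to $R_{i_0}$ to identify that local pivot's filling with the truncation of $P$. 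Your forward induction and the paper's backward walk are mirror images of each other; what your version buys is a cleaner separation between the ``rectangular'' step and the ``passage between rectangles'' step, at the cost of a slightly longer argument. Two minor remarks: your step~2 claim that the pivot of $R_{i_0}$ sits strictly to the left of $k$ is true but never used afterwards (step~3 does not need it, and step~4 only needs $i^*\le i_0$, which follows directly from $j<k$); and when you invoke Lemma~\ref{Xstruct}(3) for columns rather than rows, it is worth saying that this is legitimate because the pattern pair $\{\hbox{\pattern 1001},\hbox{\pattern 0110}\}$ is symmetric under transpose.
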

For example the 6th and 7th columns in the previous example $T$ satisfy this 
condition and indeed the 6th column is identical to the pivot column, except 
that the top cell is removed.

\begin{proof} Notice that if the $k$th and $\ell$th columns have the same size as
the pivot column then it is just a consequence of Lemma \ref{lem4}. Now we 
suppose that the $k$th and $\ell$th columns are strictly smaller than the pivot 
column. Let $i$ be the size of the $k$th and $\ell$th columns. We distinguish two cases.
\begin{itemize}
\item We suppose first that the $i-1$ top cells of the $k$th column contain strictly 
  more 1s than the corresponding $i-1$ cells in the pivot column. Then, the bottom entry
  of the $k$th and $\ell$th columns should be a 1, otherwise there would be an occurrence
  of \pattern 0110. This contradicts the fact that these entries are different.
\item We suppose then that the $i-1$ top cells of the $k$th column contain strictly 
  more 0s than the corresponding $i-1$ cells in the pivot column (say this number of 
  0s is $x$ ). Then, let $h$ be such that the $k$th and $\ell$th columns are columns of 
  the rectangles $R_h$. The pivot column of $T|_{R_h}$ contains at least $x$ 0s.
  Because of the second point in the Definition \ref{pivotgen}, it is possible to 
  find in $T|_{R_{h-1}}$ a column having a 1 in bottom position and at least $x$ 0s 
  in the $i$ bottom entries. If this column is also the pivot column of $T$ there is a
  contradiction, otherwise we can find a column $T|_{R_{h-2}}$ having a 1 in bottom 
  position and at least $x$ 0s in the $i$ bottom entries, so that after some steps
  we eventually obtain a contradiction.
\end{itemize}
This shows that the $i-1$ top entries of the $k$th or $\ell$th column contain as many 0s 
as the corresponding entries in the pivot column. One of these two columns has
a 1 in bottom position, and this one is identical to the $i$ bottommost entries of the
pivot column.
\end{proof}

Apart from the situation in the previous lemma, we can see that the bottom entry
of any column is determined by the other entries and the fact that this column is
to the left or to the right of the pivot column. This is similar to the Lemmas
\ref{lem3} and \ref{lem4}. So we have proved:

\begin{prop} Proposition \ref{phi} remains true when the shape of the diagrams
 is a Young diagram in French notation.
\end{prop}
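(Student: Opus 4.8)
The plan is to transcribe, almost line for line, the proof of Proposition~\ref{phi} from the English-notation case, feeding it the generalized objects: the pivot column of Definition~\ref{pivotgen} in place of the one of Definition~\ref{pivot}, and the lemma just proved in place of the equality-case analysis that followed Lemma~\ref{lem4}. Concretely, I would organize the argument into three steps: (i) $\phi$ lands in the set of mixed diagrams, (ii) a step-by-step inverse $\phi^{-1}$ exists, so that $\phi$ is a bijection, and (iii) zero-rows and zero-columns are preserved.

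For (i): since the column-by-column transformation only ever rewrites bottom entries, the top $k-1$ rows of $\phi(T)$ are untouched and still form an X-diagram. It then suffices to note that, after the transformation, the only $0$s in the bottom row lie either in columns strictly left of the pivot or in zero-columns; a $0$ of the first kind has no $1$ to its left (all earlier bottom entries having been set to $0$), and a $0$ of the second kind has no $1$ above it. This is exactly the mixed-diagram condition, and the verification is routine and identical in spirit to the English case.

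Step (ii) carries the weight of the proof. I would first recover the pivot index $j$ from $U=\phi(T)$: every bottom entry strictly left of the pivot is set to $0$ while the pivot keeps its bottom $1$, so $j$ is the leftmost column of $U$ carrying a $1$ in the bottom row, which is the exact analogue of Lemma~\ref{lem2}. Knowing $j$, the generalized forms of Lemmas~\ref{lem3} and~\ref{lem4}, comparing the $0$-counts (resp.\ $1$-counts) of a column's upper cells against those of the pivot, recover the original bottom entry of every column that was \emph{not} collapsed to a single $1$. The collapsed columns are restored by replacing each column carrying a single $1$ in bottom position and $0$s elsewhere by the bottommost entries of the pivot of the matching size, as the lemma just proved dictates. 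Because $\phi^{-1}$ thus defined undoes the transformation column by column, composing it with $\phi$ in either order yields the identity, and $\phi$ is a bijection onto mixed diagrams.

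The hard part is precisely the ambiguity that could not occur for Young diagrams in English notation: to the right of the pivot there may now be two columns of different sizes agreeing in everything but their bottom entry, which would make the rule ``reconstruct the bottom entry from the upper cells'' ill-posed. This is dissolved by the lemma just proved, which guarantees that of any such pair one column is a copy of the pivot up to removing top cells, hence is the one collapsed to a single $1$; together with the observation that a column carrying a single $1$ in bottom position must be such a pivot-copy, this ensures that distinct columns of $T$ give rise to distinct columns of $U$ and that the collapse loses no information. Finally, step (iii) transfers verbatim from the English case: zero-columns are preserved because $\phi$ acts column by column, and a non-zero row $i<k$ stays non-zero since the transformation can alter it only at the pivot column, where $T(i,j)=1$ forces $U(i,j)=1$. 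This reproduces every assertion of Proposition~\ref{phi} for Young diagrams in French notation.
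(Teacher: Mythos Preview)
Your overall plan matches the paper's: the paper likewise argues that the English-case proof carries over verbatim once the new lemma resolves the single new ambiguity, via analogues of Lemmas~\ref{lem2}--\ref{lem4}.

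There is, however, a concrete slip in your step~(i). The transformation does \emph{not} ``only ever rewrite bottom entries'': the collapse rule replaces each column identical to the pivot (up to removed top cells) by a column with a single $1$ at the bottom and $0$s elsewhere, so the upper $k-1$ entries of such a column change from copies of the pivot's entries to all $0$s. The conclusion that the top $k-1$ rows still form an X-diagram is correct, but the reason is different: in those rows you are replacing some columns by zero-columns, and a zero-column cannot participate in either forbidden pattern (equivalently, invoke the column version of Corollary~\ref{X2}). The same oversight resurfaces in step~(iii): row $i<k$ is not altered ``only at the pivot column'' but rather at every collapsed column; the correct argument---which is what the paper gives in the English case---is that row $i$ is altered only if $T(i,j)=1$, and since the pivot column itself is left untouched one still has $U(i,j)=1$. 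Finally, a minor wording issue in your description of the ambiguity: the two problematic columns to the right of the pivot have the \emph{same} size as each other (possibly strictly smaller than the pivot), not different sizes; that is exactly the hypothesis of the lemma you are invoking.
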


\bigskip

The last step of this section is to notice that the bijection $\phi$
for Young diagrams in French notation has an immediate generalization
to other polyominoes: they are the ones obtained by permuting the
$k-1$ top rows of a Young diagram in French notation with $k$ rows.

In this case we also define rectangular subsets $R_i^{\phantom i}$ by  
the following construction.
Let $r$ be the number of distinct columns of $S$, and $C_1,\dots,C_r$ be
the columns, sorted by growing indices, such that each of them is distinct from 
all columns of greater index. And let $R_i$ be the set of cells that are to 
the left (in the same row) of a cell of $C_i$. For an example see Figure 
\ref{rect2}.

There is also a bijection between X-diagrams and mixed diagrams of shape $S$, 
preserving any zero-row or zero-column. It is defined exactly the same way as in
the case where $S$ in a Young diagram in French notation.

\begin{figure}[h!tp]
\centering\psset{unit=0.4cm}
\begin{pspicture}(0,-1)(6,5)
\psframe[fillstyle=solid, fillcolor=lightgray](0,0)(2,5)
\psline(0,0)(6,0)
\psline(0,1)(6,1)
\psline(0,2)(5,2)
\psline(0,3)(5,3)
\psline(0,4)(5,4)
\psline(0,5)(5,5)
\psline(0,0)(0,5)
\psline(1,0)(1,5)
\psline(2,0)(2,5)
\psline(3,0)(3,3)\psline(3,4)(3,5)
\psline(4,0)(4,3)\psline(4,4)(4,5)
\psline(5,0)(5,1)\psline(5,2)(5,3)\psline(5,4)(5,5)
\psline(6,0)(6,1)
\rput(3,-1){$R_1$}
\end{pspicture}
\hfill
\begin{pspicture}(0,-1)(6,5)
\psframe[fillstyle=solid, fillcolor=lightgray](0,0)(4,3)
\psframe[fillstyle=solid, fillcolor=lightgray](0,4)(4,5)
\psline(0,0)(6,0)
\psline(0,1)(6,1)
\psline(0,2)(5,2)
\psline(0,3)(5,3)
\psline(0,4)(5,4)
\psline(0,5)(5,5)
\psline(0,0)(0,5)
\psline(1,0)(1,5)
\psline(2,0)(2,5)
\psline(3,0)(3,3)\psline(3,4)(3,5)
\psline(4,0)(4,3)\psline(4,4)(4,5)
\psline(5,0)(5,1)\psline(5,2)(5,3)\psline(5,4)(5,5)
\psline(6,0)(6,1)
\rput(3,-1){$R_2$}
\end{pspicture}
\hfill
\begin{pspicture}(0,-1)(6,5)
\psframe[fillstyle=solid, fillcolor=lightgray](0,0)(5,1)
\psframe[fillstyle=solid, fillcolor=lightgray](0,2)(5,3)
\psframe[fillstyle=solid, fillcolor=lightgray](0,4)(5,5)
\psline(0,0)(6,0)
\psline(0,1)(6,1)
\psline(0,2)(5,2)
\psline(0,3)(5,3)
\psline(0,4)(5,4)
\psline(0,5)(5,5)
\psline(0,0)(0,5)
\psline(1,0)(1,5)
\psline(2,0)(2,5)
\psline(3,0)(3,3)\psline(3,4)(3,5)
\psline(4,0)(4,3)\psline(4,4)(4,5)
\psline(5,0)(5,1)\psline(5,2)(5,3)\psline(5,4)(5,5)
\psline(6,0)(6,1)
\rput(3,-1){$R_3$}
\end{pspicture}
\hfill
\begin{pspicture}(0,-1)(6,5)
\psframe[fillstyle=solid, fillcolor=lightgray](0,0)(6,1)
\psline(0,0)(6,0)
\psline(0,1)(6,1)
\psline(0,2)(5,2)
\psline(0,3)(5,3)
\psline(0,4)(5,4)
\psline(0,5)(5,5)
\psline(0,0)(0,5)
\psline(1,0)(1,5)
\psline(2,0)(2,5)
\psline(3,0)(3,3)\psline(3,4)(3,5)
\psline(4,0)(4,3)\psline(4,4)(4,5)
\psline(5,0)(5,1)\psline(5,2)(5,3)\psline(5,4)(5,5)
\psline(6,0)(6,1)
\rput(3,-1){$R_4$}
\end{pspicture}
\caption{\label{rect2} Example of the rectangles $R_i$. The columns $C_i$ have indices
2,4,5, and 6.}
\end{figure}

\bigskip

\subsection{The generalization of $\Phi$}

As in the previous case, $\Phi$ is recursively defined with respect to the number 
of rows of the diagrams. The only thing to check is that the 
polyominoes we can obtain are precisely the \Le-complete ones.

\begin{lem} For any polyomino $S$ the following conditions are equivalent:
\begin{itemize}
\item Every column of $S$ intersects the bottom row, and $S$ is $\Le$-complete.
\item The polyomino $S$ is obtained by permuting the $k-1$ top rows, of 
a Young diagram in French notation with $k$ rows.
\end{itemize}
\end{lem}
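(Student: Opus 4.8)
The plan is to prove the two implications separately, noting first that both conditions describe the same geometric picture --- a \emph{left-justified} shape whose bottom row is longest --- so that the real work is to extract this picture from the stated hypotheses. Throughout I will use the geometric reading of Definition \ref{lecomplete} recorded in Figure \ref{lec}: whenever the bottom-left, bottom-right and top-right corners of an axis-aligned rectangle are cells of $S$, the top-left corner is a cell of $S$ as well.

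For the direction ``second $\Rightarrow$ first'' I would start from a Young diagram in French notation with $k$ rows and permute its top $k-1$ rows. Each row of such a shape occupies an initial segment of columns $1,\dots,\ell$, and permuting rows preserves this, so every row is left-justified; moreover the bottom row, which is not moved, remains a longest row. Left-justification makes \Le-completeness automatic: in any rectangle as above the top-right corner lies in a row that reaches some column, hence that row reaches every smaller column and in particular contains the top-left corner. Finally, since the bottom row is longest and all rows begin at column $1$, every nonempty column already contains a bottom-row cell, so every column of $S$ meets the bottom row.

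For the main direction ``first $\Rightarrow$ second'' I fix a polyomino $S$ satisfying both hypotheses. Connectivity first gives two structural facts: the occupied rows are consecutive (a totally empty row would split $S$), so speaking of its $k$ rows is legitimate, and the occupied columns form a contiguous block, which I normalize to $[1,m]$. The hypothesis that every column meets the bottom row says precisely that the bottom row contains a cell in each occupied column, so together with contiguity the bottom row is exactly the full interval $[1,m]$. This full bottom row is the engine of the argument: given any cell of an upper row, say at column $l'$, and any column $c$ with $1\le c<l'$, the bottom row supplies the bottom-left cell at column $c$ and the bottom-right cell at column $l'$, while the given cell is the top-right corner; \Le-completeness then forces the top-left cell, i.e.\ the cell of that upper row at column $c$. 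Hence every row is an initial interval $[1,\ell_i]$, and since the full column range is $[1,m]$ while the bottom row realizes it, the bottom row is a longest row.

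To finish, I would sort the multiset of row lengths $\ell_1,\dots,\ell_k$ into a partition, whose largest part equals the bottom-row length; the corresponding French diagram then shares the bottom row of $S$, while the remaining $k-1$ parts match the lengths of the upper rows of $S$, so a permutation of the top $k-1$ rows recovers $S$. I expect the main obstacle to be exactly the left-justification step of the second implication: the key realization that unlocks it is that ``every column meets the bottom row'' together with connectivity forces the bottom row to be a \emph{complete} interval $[1,m]$, which can then serve as the bottom edge of the rectangle for every cell lying above it. The only remaining care is the correct matching of corners to Definition \ref{lecomplete} and the bookkeeping of ties among the row lengths, neither of which is substantive.
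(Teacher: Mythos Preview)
Your proof is correct and follows essentially the same approach as the paper's: both arguments show that the bottom row is the full interval of occupied columns and then use $\Le$-completeness against that bottom row to force every other row to be a left-justified interval, concluding that the shape is a row-permuted French Young diagram. Your write-up is more careful about the intermediate steps (contiguity of rows and columns from connectivity, handling ties among the $\ell_i$), but the underlying idea is identical to the paper's.
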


\begin{proof} An example is given in Figure \ref{rect2}.
Suppose for example that the first condition is true. Then the bottom
row is a set of contiguous cells, and it is of maximal size. Since the 
polyomino is
\Le-complete, we obtain that each other row is also a set of contiguous cells and
has the same left extremity as the bottom row. This implies the second condition.
The converse is obtained as easily.
\end{proof}

For any \Le-complete polyomino $S$, we can consider the subset $S'$ of 
all cells
that are above a cell of the bottom row. Then $S'$ satisfies the condition of the
previous Lemma. Then for any X-diagram $T$ of shape $S$, we define $\phi(T)$ by 
replacing $T|_{S'}$ with its image by $\phi$. And $\Phi(T)$ is defined recursively as
in the previous case, by replacing the $k-1$ top rows of $\phi(T)$ with their image
by $\Phi$. Although the details may seem messy because of the more general 
polyominoes, the ideas are really the same as in the previous case.

\begin{prop} Proposition \ref{pphi} remains true with diagrams whose 
shape is a \Le-complete polyomino.
\end{prop}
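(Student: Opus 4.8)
The plan is to prove this exactly as Proposition \ref{pphi} was proved, by induction on the number of rows $k$, the only change being that every appeal to the bijection $\phi$ now invokes its generalization to \Le-complete shapes (the preceding Proposition) rather than the Young-diagram version. The base case $k=1$ is immediate: a one-row diagram contains no $2\times2$ submatrix at all, so it is simultaneously an X-diagram and a \Le-diagram, and $\Phi$ is the identity on it. Before running the induction I would check that the recursion is well posed, i.e.\ that deleting the bottom row of a \Le-complete polyomino yields a \Le-complete polyomino. This is built into Definition \ref{lecomplete}: any three cells forming a $\Le$ inside the top $k-1$ rows already form a $\Le$ in $S$, so the required fourth cell, which also lies in the top $k-1$ rows, is present in $S$ and survives the deletion. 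Hence the recursive call of $\Phi$ on the top $k-1$ rows of $\phi(T)$ is applied to an X-diagram (the upper part of a mixed diagram) whose shape is again \Le-complete, and the inductive hypothesis applies. The columns of $S$ that do not meet the bottom row are left untouched by $\phi$, just as the columns to the right of $\lambda_k$ were in the Young case.

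Next I would transfer the two structural facts. Since the generalized $\phi$ preserves every zero-column, an immediate induction gives that $\Phi$ preserves every zero-column. The statement that $\Phi(T)$ is a \Le-diagram then copies Lemma \ref{Le} verbatim: by induction the top $k-1$ rows of $\Phi(T)$ already form a \Le-diagram, and if the bottom row of $\Phi(T)=\phi(T)$ carries a $0$ with a $1$ to its left, then the mixed-diagram condition forces that there is no $1$ above this $0$; being the bottom cell of its column, that $0$ therefore sits in a zero-column of $\phi(T)$, hence of $\Phi(T)$ by zero-column preservation, so the \Le-condition holds at that cell. Preservation of the set of zero-rows follows in the same way from the corresponding property of $\phi$.

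Finally I would establish bijectivity by displaying $\Phi^{-1}$ as the reverse recursion: given a \Le-diagram $U$, first replace its top $k-1$ rows by their image under $\Phi^{-1}$ (legitimate, since those rows form a \Le-diagram of \Le-complete shape), and then apply $\phi^{-1}$. The one point that needs care is that this intermediate diagram is a genuine mixed diagram, so that $\phi^{-1}$ may be applied. Its top $k-1$ rows form an X-diagram by the inductive hypothesis, and its bottom row, which $\Phi^{-1}$ leaves unchanged, satisfies the mixed condition: a bottom $0$ of $U$ having a $1$ to its left must, by the \Le-condition, have only $0$s above it, hence lies in a zero-column of $U$, and this column stays a zero-column under the zero-column-preserving map $\Phi^{-1}$, so there is still no $1$ above that $0$. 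With $\phi$ bijective (generalized Proposition \ref{phi}) and the inductive bijectivity on $k-1$ rows, the composites $\Phi^{-1}\circ\Phi$ and $\Phi\circ\Phi^{-1}$ are the identities on X-diagrams and on \Le-diagrams respectively.

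I expect the main obstacle to be bookkeeping rather than any genuinely new idea. The delicate part is to be sure that every ingredient borrowed from the Young-diagram argument—that $\phi$ is a zero-preserving bijection from X-diagrams onto mixed diagrams of shape $S$, and that \Le-completeness is inherited under deletion of the bottom row—remains valid for the rectangles $R_i$ and the more irregular column profiles of a general \Le-complete polyomino, so that the clean recursive picture is not disturbed. Once these are confirmed, the proof is a line-by-line reprise of the proofs of Lemma \ref{Le} and Proposition \ref{pphi}.
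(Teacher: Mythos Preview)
Your proposal is correct and follows essentially the same approach as the paper: induction on the number of rows, applying the generalized $\phi$ to the part $S'$ of $S$ lying above the bottom row, then recursing on the top $k-1$ rows (which you correctly verify remain \Le-complete). The paper itself gives almost no detail here---it simply asserts that ``the ideas are really the same as in the previous case''---so your write-up, which carefully checks well-posedness of the recursion, the transfer of Lemma~\ref{Le}, and the mixed-diagram condition needed to apply $\phi^{-1}$, is in fact more thorough than the original.
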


An example of a \Le-complete polyomino $S$, the subset $S'$ and the 
rectangles $R_i$ is given in Figure \ref{ex}.

\begin{figure}[h!tp]
\centering\psset{unit=0.4cm}
\begin{pspicture}(0,0)(6,6)
\psline(0,0)(0,6)
\psline(1,0)(1,6)
\psline(2,1)(2,6)
\psline(3,0)(3,5)
\psline(4,0)(4,3)\psline(4,4)(4,5)
\psline(5,0)(5,3)\psline(5,4)(5,5)
\psline(6,0)(6,2)
\psline(0,0)(1,0)\psline(3,0)(6,0)
\psline(0,1)(2,1)\psline(3,1)(6,1)
\psline(0,2)(6,2)
\psline(0,3)(5,3)
\psline(0,4)(5,4)
\psline(0,5)(5,5)
\psline(0,6)(2,6)
\end{pspicture}
\hfill
\begin{pspicture}(0,0)(6,6)
\psframe[fillstyle=solid, fillcolor=lightgray](0,0)(1,6)
\psline(0,0)(0,6)
\psline(1,0)(1,6)
\psline(2,1)(2,6)
\psline(3,0)(3,5)
\psline(4,0)(4,3)\psline(4,4)(4,5)
\psline(5,0)(5,3)\psline(5,4)(5,5)
\psline(6,0)(6,2)
\psline(0,0)(1,0)\psline(3,0)(6,0)
\psline(0,1)(2,1)\psline(3,1)(6,1)
\psline(0,2)(6,2)
\psline(0,3)(5,3)
\psline(0,4)(5,4)
\psline(0,5)(5,5)
\psline(0,6)(2,6)
\end{pspicture}
\hfill
\begin{pspicture}(0,0)(6,6)
\psframe[fillstyle=solid, fillcolor=lightgray](0,0)(1,3)
\psframe[fillstyle=solid, fillcolor=lightgray](0,4)(1,5)
\psframe[fillstyle=solid, fillcolor=lightgray](3,0)(5,3)
\psframe[fillstyle=solid, fillcolor=lightgray](3,4)(5,5)
\psline(0,0)(0,6)
\psline(1,0)(1,6)
\psline(2,1)(2,6)
\psline(3,0)(3,5)
\psline(4,0)(4,3)\psline(4,4)(4,5)
\psline(5,0)(5,3)\psline(5,4)(5,5)
\psline(6,0)(6,2)
\psline(0,0)(1,0)\psline(3,0)(6,0)
\psline(0,1)(2,1)\psline(3,1)(6,1)
\psline(0,2)(6,2)
\psline(0,3)(5,3)
\psline(0,4)(5,4)
\psline(0,5)(5,5)
\psline(0,6)(2,6)
\end{pspicture}
\hfill
\begin{pspicture}(0,0)(6,6)
\psframe[fillstyle=solid, fillcolor=lightgray](0,0)(1,2)
\psframe[fillstyle=solid, fillcolor=lightgray](3,0)(6,2)
\psline(0,0)(0,6)
\psline(1,0)(1,6)
\psline(2,1)(2,6)
\psline(3,0)(3,5)
\psline(4,0)(4,3)\psline(4,4)(4,5)
\psline(5,0)(5,3)\psline(5,4)(5,5)
\psline(6,0)(6,2)
\psline(0,0)(1,0)\psline(3,0)(6,0)
\psline(0,1)(2,1)\psline(3,1)(6,1)
\psline(0,2)(6,2)
\psline(0,3)(5,3)
\psline(0,4)(5,4)
\psline(0,5)(5,5)
\psline(0,6)(2,6)
\end{pspicture}
\caption{\label{ex} Example of a \Le-complete polyomino $S$, and 
rectangles $R_i$. The
subset $S'$ is obtained by removing the second and third column.}
\end{figure}

\bigskip

\section{Overview of related results and conclusion}

Many questions may arise about how to generalize this work, as can be seen in
\cite{AS}. We can consider many kinds of avoided patterns, and each of them raises
the question of which class of polyominoes have interesting properties. 
In this section, we sketch some consequences of the results of the 
previous sections and other related results.

\bigskip

First we show that, as announced in the introduction, we can obtain a bijection
between X-diagrams and \Le-diagrams preserving the set of zero-columns and the
set of unrestricted rows. This is done by the following modification of the definition 
of the pivot column:

\begin{definition}
We call {\it pivot column} of an X-diagram $T$ a 
non-zero column among the $\lambda_k$ first ones (otherly said, it is a column of
maximal size) such that:
\begin{itemize}
\item there is a 0 in bottom position,
\item it has a maximal number of 1s among columns satisfying the previous 
  property,
\item it is in leftmost position among columns satisfying the two previous 
  properties.
\end{itemize}

If such a column exists, it is uniquely defined (by the third property).
There is no such column only in the case where the bottom row is unrestricted.
\end{definition}

With this new definition, we can define new bijections $\phi_2$ and $\Phi_2$
the same way we defined $\phi$ and $\Phi$, and we obtain the following:

\begin{prop} For any \Le-complete polyomino $S$, the map $\Phi_2$ is a 
bijection between
X-diagrams of shape $S$ and \Le-diagrams of shape $S$, and this bijection preserves
the zero-columns and the restricted rows.
\end{prop}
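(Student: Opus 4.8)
The plan is to establish this proposition by showing that the entire machinery built up for the pivot column of Definition \ref{pivot} transfers, essentially verbatim, to the modified pivot column defined here. The key structural observation is that the new definition is obtained from the old one by systematically exchanging the roles of 0s and 1s in the bottom row: where the original pivot column required a 1 in bottom position and a maximal number of 0s, the new one requires a 0 in bottom position and a maximal number of 1s. First I would verify that this new pivot column is well-defined and uniquely determined, and that it fails to exist precisely when the bottom row is \emph{unrestricted} (rather than a zero-row as before). This is the natural dual boundary condition, and it is what makes the unrestricted rows the conserved quantity in place of the zero-rows.

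Next I would reprove the analogues of Lemmas \ref{lem2}, \ref{lem3}, and \ref{lem4} for $\phi_2$. The arguments should be identical in spirit: each relies only on the pattern-avoidance properties \pattern 1001 and \pattern 0110 characterizing X-diagrams (via Lemma \ref{Xstruct}) together with the extremal defining properties of the pivot column, and these properties are preserved under the $0 \leftrightarrow 1$ swap in the bottom row. In particular the column-by-column transformation defining $\phi_2$ is run so that its step-by-step inverse exists, exactly as in the proof of Proposition \ref{phi}; the only change is that the quantity tracked through the transformation is the number of 1s in the top $k-1$ entries rather than the number of 0s. From this I obtain that $\phi_2$ is a bijection between X-diagrams and mixed diagrams of shape $S$ preserving zero-columns, and I must check it preserves unrestricted rows: a row is restricted exactly when it contains a 0 with a 1 above it, and since $\phi_2$ only alters bottom entries and columns that are zero-columns in the image, the restriction status of each upper row is unchanged, while the bottom row's status is governed by the boundary condition above.

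Then I would define $\Phi_2$ by the same recursion on the number of rows used for $\Phi$: replace the $k-1$ top rows of $\phi_2(T)$ with their image under $\Phi_2$. The proof that $\Phi_2(T)$ is a \Le-diagram runs exactly as Lemma \ref{Le}, using that $\phi_2(T)$ is a mixed diagram and that $\Phi_2$ preserves zero-columns. Bijectivity follows from the bijectivity of $\phi_2$ together with the recursive structure, precisely as in Proposition \ref{pphi}. The preservation of zero-columns is immediate from the column-by-column nature of each step; the preservation of restricted rows follows because an upper row keeps its restriction status under each application of $\phi_2$, and the bottom row of $S'$ is unrestricted iff there is no pivot column, which the construction respects. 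For a general \Le-complete polyomino $S$ one passes to the subset $S'$ of cells lying above the bottom row exactly as in the preceding subsection, so no new geometric input is needed beyond what was already established.

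The main obstacle I anticipate is not the recursion or the bijectivity, but the careful bookkeeping in the dual analogue of the generalization lemma for polyominoes (the one preceding the last proposition of Section 4), where columns of differing sizes to the right of the pivot must be handled. There the proof had to rule out ambiguity between two differently-filled columns by invoking the second clause of Definition \ref{pivotgen} and an inductive descent through the rectangles $R_h$. Under the $0 \leftrightarrow 1$ exchange the extremal rectangle $R_i$ singled out is now the one maximizing 1s rather than 0s, and I would need to confirm that the descent argument still terminates in a contradiction; I expect it does by the same counting, but this is the step where the symmetry between the two definitions is least transparent and most deserves explicit verification.
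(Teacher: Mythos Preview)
Your proposal is correct and matches the paper's approach. The paper itself offers no detailed proof of this proposition: it simply states that ``with this new definition, we can define new bijections $\phi_2$ and $\Phi_2$ the same way we defined $\phi$ and $\Phi$,'' and asserts the result. Your plan to rerun the entire argument with the $0\leftrightarrow 1$ swap in the bottom-row conditions, reprove the analogues of Lemmas \ref{lem2}--\ref{lem4}, and carry through the recursion and polyomino generalization, is exactly what the paper leaves implicit; you even isolate the one step (the dual of the lemma on differently-filled columns to the right of the pivot in the general-polyomino case) where the symmetry is least automatic, which the paper does not flag.
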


Now we consider a column-convex polyomino $S$ such that the top of each column are at the 
same height, as in Figure \ref{polyo}. Such objects are sometimes called {\it stalactite
polyominoes}. It is \Le-complete. If we have an X-diagram of 
shape $S$, we can permute the columns of the polyomino and get another X-diagram. It is 
a consequence of the symmetry in the pattern pair (\pattern 1001, \pattern 0110). So with
the previous bijection, we obtain that the number of \Le-diagram of shape $S$ only depends
on the column lengths. 

\begin{figure}[h!tp]
\centering
\psset{unit=4mm}\begin{pspicture}(0,1)(8,6)
\psline(0,6)(0,2)(1,2)(1,1)(2,1)(2,3)(3,3)(3,4)(4,4)(4,3)(5,3)(5,3)(6,3)(6,4)(7,4)(7,2)(8,2)(8,6)(0,6)
\psline(0,5)(8,5)
\psline(0,4)(3,4)\psline(4,4)(6,4)\psline(7,4)(8,4)
\psline(0,3)(3,3)\psline(7,3)(8,3)
\psline(1,2)(2,2)
\psline(1,1)(1,6)\psline(2,2)(2,6)\psline(3,3)(3,6)
\psline(4,4)(4,6)\psline(5,3)(5,6)\psline(6,4)(6,6)\psline(7,4)(7,6)
\end{pspicture}
\caption{\label{polyo} Example of a stalactite polyomino.}
\end{figure}

It is interesting to notice that this is very close to some results of \cite{JJ}
(we thank Mireille Bousquet-M\'elou for mentioning this reference). Indeed,
one of the objects studied in this reference are the fillings of stalactite polyominoes
by 0s and 1s, avoiding a pattern which is the identity matrix of a given size, and with
a maximal number of 1s. Jonsson shows that the number of such fillings only depends of the
column length of the polyomino, which is a similarity with the case of \Le-diagrams.


\bigskip

\end{document}